\newtheorem{lemma}{Lemma}[section]
\newtheorem{remark}{Remark}[section]
\providecommand{\keywords}[1]{\textbf{\textit{Keywords:}} #1}
\let\pa\partial
\def\ds{\mathrm{d}\mathbf{s}}
\newcommand{\R}{\mathbb R}
\newcommand{\bP}{\mathbf P}
\newcommand{\bn}{\mathbf n}
\newcommand{\bp}{\mathbf p}
\newcommand{\br}{\mathbf r}
\newcommand{\bs}{\mathbf s}
\newcommand{\bx}{\mathbf x}
\newcommand{\F}{\mathcal F}
\newcommand{\T}{\mathcal T}
\newcommand{\cO}{\mathcal O}
\newcommand{\Div}{\mbox{\rm div}}
\newcommand{\unablah}{{\nabla}_{\Gamma_h}}
\newcommand{\TGh}{{\mathcal T}_h^\Gamma}
\newcommand{\tnorm}[1]{|\!|\!| #1 |\!|\!|}
\newcommand{\nablaG}{\nabla_{\Gamma}}
\definecolor{darkred}{rgb}{.7,0,0}
\definecolor{green}{rgb}{0,0.7,0}
\newcommand\Green[1]{\textcolor{black}{#1}}
\newcommand\VY[1]{\textcolor{black}{#1}}
\def\d{\hspace{2pt}{\rm d}}
\def\nat{\nabla_{\Gamma}}
\def\nath{\nabla_{\Gamma_h}}
\begin{document}

\title{An adaptive stabilized trace finite element method for surface PDEs}

\author{Timo Heister\thanks{School of Mathematical and Statistical Sciences, Clemson University, Clemson, SC 29634-0975, USA, heister@clemson.edu, vyushut@clemson.edu.} \and  Maxim A. Olshanskii\thanks{Department of Mathematics, University of Houston, Houston, Texas 77204-3008, USA
maolshanskiy@uh.edu, www.math.uh.edu/\string~molshan}
\and Vladimir Yushutin\footnotemark[1]
}

\maketitle
\pgfplotsset{filter discard warning=false}

\markboth{}{}

\begin{abstract}
The paper introduces an adaptive version of the stabilized Trace Finite Element Method (TraceFEM) designed to solve low-regularity elliptic problems on level-set surfaces using a shape-regular bulk mesh in the embedding space. Two stabilization variants, gradient-jump face and normal-gradient volume, are considered for continuous trace spaces of the first and second degrees, based on the polynomial families $Q_1$ and $Q_2$. We propose a practical error indicator that estimates the `jumps' of finite element solution derivatives across background mesh faces and it avoids integration of any quantities along implicitly defined curvilinear edges of the discrete surface elements.  
For the $Q_1$ family of piecewise trilinear polynomials on bulk cells, the solve-estimate-mark-refine strategy, combined with the suggested error indicator, achieves optimal convergence rates typical of two-dimensional problems. We also provide a posteriori error estimates, establishing the reliability of the error indicator for the  $Q_1$  and  $Q_2$ elements and for two types of stabilization. In numerical experiments, we assess the reliability and efficiency of the error indicator. While both stabilizations are found to deliver comparable performance, the lowest degree finite element space appears to be the more robust choice for  the adaptive TraceFEM framework.
\end{abstract}

\keywords{
 Surface, PDE, finite elements, traces, unfitted grid, adaptivity, level set, stabilization
}

\section{Introduction}


The Trace or Cut Finite Element Method is one of the approaches used to approximate surface Partial Differential Equations (PDEs) \cite{olshanskii2017trace, burman2015stabilized}. It falls into the category of geometrically unfitted methods because the domain of a variational problem, a two-dimensional surface denoted as $\Gamma$, is embedded within a three-dimensional triangulated domain $\Omega$ that is a subset of $\mathbb{R}^3$, such as a sufficiently large cube.
Identifying the active mesh, denoted as $\Omega_h\subset\Omega$, and performing local refinement or any other mesh cell updating procedure is straightforward due to the geometrical simplicity. We refer to Figure~\ref{fig:snapshots} for a visual representation. Furthermore, handling data structures on the {octree} mesh $\Omega_h$ can be implemented efficiently and is available in many finite element libraries. This flexibility is one of the advantages of the Trace Finite Element Method (TraceFEM). However, it comes with the cost of constructing quadratures on the intersections of $\Gamma$ with cells from $\Omega_h$. The size and shape of these intersections vary uncontrollably between cells, leading to the necessity for a stabilization term, similar to $s_h$ in equation \eqref{abstract}, in any TraceFEM discretization of surface problems. Several variants of such terms are available in the literature \cite{olshanskii2017trace, larson2020stabilization}, but in this context, we will only consider the `gradient-jump' face stabilization and the 'normal-gradient' volume stabilization. These methods have been successfully used and proven to be practical and robust.

Adaptive strategies within the context of stabilized TraceFEM are not yet well-understood. Previous discussions on adaptivity in the TraceFEM setting can be found in the literature \cite{DemlowOlsh, Chernyshenko2015}. In \cite{DemlowOlsh}, there is no stabilization, and an inferior (as seen in the comparison in \cite{olshanskii2017trace}) 'full-gradient' stabilization is considered in \cite{Chernyshenko2015}. Additionally, both papers only considered piece-wise linear finite element spaces, with \cite{DemlowOlsh} assuming tetrahedral meshes and \cite{Chernyshenko2015} using octree meshes.
We extend the adaptive methodology introduced in \cite{DemlowOlsh} by studying the first and second-order stabilized TraceFEM on octree meshes. Another novel aspect is the consideration of two  stabilizations, namely $s_h^{JF}$ (as defined in \eqref{s_jf}) and $s_h^{NV}$ (as defined in \eqref{s_nv}), in the context of adaptive TraceFEM.

Many mathematical models involving surface PDEs necessitate the use of adaptive numerical methods. For instance, the dynamics of liquid crystal films can give rise to the formation of defects \cite{gharbi2013microparticles, hu2016disclination, koning2016spherical, nestler2022active}. Mathematically, a defect in a liquid crystal film corresponds to low regularity solutions of the governing PDEs on surfaces. From a numerical modeling perspective, this entails the need for adaptive refinement and coarsening as the defect forms and evolves along the film.
The evolution of defects is driven by variations in the energy of the liquid crystal and the mass flow, which are governed by the surface Navier--Stokes equation \cite{nestler2022active}. The necessity of addressing these coupled phenomena numerically serves as motivation for the development of adaptive surface FEMs with both first and second-order polynomial accuracy.

In this paper, our focus is on adaptive strategies for the stabilized TraceFEM applied to the Laplace--Beltrami equation, which serves as a prototypical elliptic problem on a surface $\Gamma$ \cite{Dziuk88}. 
An overview of the motivation and the main results follow. At this point, we will omit certain technical details regarding the geometrical consistency of the adaptive method.
To begin, consider an abstract variational problem on a surface $\Gamma$: given $f\in H^{-1}(\Gamma)$, we seek to find $u\in H^1(\Gamma)$ such that $a(u,v)=\left<f,v\right>$ for all $v \in H^1(\Gamma)$. We assume that the bilinear form $a$ is symmetric and coercive.
In the TraceFEM, the discrete space $V_h$ is defined on a graded, regular bulk mesh $\Omega_h$, and we solve the following \VY{discrete problem}:
\begin{align}\label{abstract}
a(u_h, v_h)+s_h(u_h,v_h)=\left<f,v_h\right>, \quad \forall v_h\in V_h
\end{align}
Here, a stabilization form $s_h$ ensures the algebraic stability of the resulting linear algebraic system. Residual and jump indicators can be derived \cite{bernardi2000adaptive} from the integration by parts in $a(u_h, v_h)$, as done in \cite{DemlowOlsh} for an unstabilized TraceFEM. However, in our case, the stabilization $s_h$ is incorporated into an a posteriori estimate.

We would like to highlight two important aspects of the adaptivity methodology for the method \eqref{abstract}:
\begin{itemize}
       \item The jump indicator requires the construction of non-standard one-dimensional quadratures to handle curved intersections of the surface with faces of bulk cells. The associated implementation burden represents a practical inconvenience of the adaptive TraceFEM approach introduced in \cite{DemlowOlsh}.
    \item  We have observed that the ratio $s_h(u_h,u_h)/a(u_h,u_h)$, where both forms are restricted to a single bulk element, often exhibits significant growth, even for uniformly refined meshes. Consequently, the inclusion of the stabilization term $s_h$ in an error indicator has the potential to compromise its efficiency. 
\end{itemize}

To address the first aspect, we propose an alternative error indicator designed for adaptively refined, graded, octree tessellations of the bulk domain $\Omega$, denoted as $\Omega_h$. This novel indicator is reliable and straightforward to compute, as it eliminates the need for integration over the curved intersections of an implicitly defined surface with two-dimensional faces of the bulk cells. Instead, the indicator incorporates a jump term that only requires the use of a standard 2D quadrature for the faces of the bulk mesh cells. Moreover, for the TraceFEM stabilized with the gradient-jump face stabilization, this term is already an integral part of the method.

As for the second aspect, it is worth noting that the efficiency analysis of TraceFEM indicators remains an open question to the best of our knowledge. To explore this further, we undertake a comprehensive numerical investigation to assess the efficiency of the new indicator.
In the case of stabilized TraceFEM with $Q_1$ finite elements, the indicator is found to be efficient. However, in the $Q_2$ case, efficiency gradually diminishes, although the convergence rates for the adaptive gradient-jump stabilized TraceFEM still appear to remain optimal.

The remainder of this paper is organized as follows:
Section~\ref{s_FEM} introduces the stabilized adaptive TraceFEM along with a new computationally practical indicator.
In Section~\ref{s:analysis}, we provide a proof of the reliability estimate for the indicator.
In Section~\ref{s:num},  the adaptive method is tested numerically for low-regularity solutions to the Laplace--Beltrami equation on the unit sphere. We assess both the reliability and efficiency of the method, considering $Q_1$ and $Q_2$ conforming finite elements defined on octree meshes. Furthermore, we perform experiments using the adaptive TraceFEM with two different stabilizations.

\section{The adaptive trace finite element method} \label{s_FEM}
We are interested in the geometrically unfitted finite element method known as the TraceFEM~\cite{ORG09}.
The method  considered in this section is an extension  of the TraceFEM and stabilization techniques introduced in \cite{Alg1,burman2018cut,grande2018analysis} to hexahedral bulk octree   meshes. After formulation of the method for our model problem, the Laplace--Beltrami equation, we introduce error indicators and an adaptive discretization. 

\subsection{Model problem}\label{s_setup}
Let $\Omega$ be an open domain in $\mathbb{R}^3$ and let $\Gamma\subset \Omega$ be a smooth connected  compact and closed hyper-surface embedded in $\R^3$.
For a sufficiently smooth function $g:\Omega\rightarrow \mathbb{R}$ the tangential derivative
on $\Gamma$ is defined by
\begin{equation*}
	\nabla_{\Gamma} g=\nabla g-(\nabla g\cdot \bn)\bn,\label{e:2.1}
\end{equation*}
where $\bn$ denotes the unit normal to $\Gamma$. Denote  by $\Div_\Gamma=\mbox{tr}(\nabla_\Gamma)$ the surface divergence operator and by $\Delta_{\Gamma}=\nabla_\Gamma\cdot\nabla_\Gamma$ the Laplace--Beltrami operator on $\Gamma$.
The  Laplace--Beltrami equation is a model example of an elliptic PDE posed on the surface $\Gamma$. The equation reads as follows:  find $u:\Gamma \rightarrow \mathbb{R}$ satisfying  
\begin{equation}\label{LB} 
	-\Delta_{\Gamma} u +u =f\quad\text{on}~~\Gamma
\end{equation}

The zero order term is added to avoid non-essential technical details of handling  one-dimensional kernel consisting of all constant functions on $\Gamma$.
 The problem is well-posed in the sense of the weak formulation: 
Given $f\in H^{-1}(\Gamma)$, 
find  $ u\in H^1(\Gamma)$ 
satisfying
\begin{equation}\label{LBw} 
	\int_{\Gamma}\big(	 \nabla_{\Gamma} u\cdot\nabla_{\Gamma}v+uv\big)\,ds =\int_\Gamma f\,v\,ds\, \quad\forall\,v\in  H^1(\Gamma).
\end{equation}
If $f\in L^2(\Gamma)$, then the unique solution satisfies $ u\in H^2(\Gamma)$ and $\|u\|_{H^2(\Gamma)}\le c\|f\|_{L^2(\Gamma)} $ with a constant $c$ independent of $f$; see \cite{Aubin}.

\subsection{Discretization}
We assume an octree cubic mesh $\mathcal{T}_h$ covering the bulk domain $\Omega$.
 In addition, we assume that the mesh
is \emph{gradually} refined, i.e., the sizes of two active (finest level) neighboring cubes differ at most by a factor of 2. Such octree grids are also known as balanced. The method also applies for unbalanced octrees, but our analysis and experiments use balanced grids. The set of all active (finest level) faces is denoted by $\pa\mathcal{T}_h$. The mesh is not aligned with the surface $\Gamma$, which can cut through the cubes with no further restrictions.

By $\Gamma_h$ we denote a given  approximation of $\Gamma$ such that
$\Gamma_h$ is a $C^{0,1}$ piecewise smooth surface without  boundary and $\Gamma_h$ is formed by smooth segments:
\begin{equation} \label{defgammah}
 \Gamma_h=\bigcup\limits_{T\in\mathcal{F}_h} \overline{T},
\end{equation}
where $\mathcal{F}_h=\{T\subset\Gamma_h\,:\, T=\Gamma_h\cap S, ~\text{for}\,S\in \T_h\}$. 
For a given $T\in\F_h$ denote by $S_T$ a cube $S_T\in\mathcal{T}_h$ such that $T\subset S_T$ (if $T$ lies on a side shared by two cubes, any of these two cubes can be chosen as $S_T$).

In practice, we construct $\Gamma_h$ as follows. Assume $\phi$ is a signed distance or general level set function for $\Gamma$ .  We define $\Gamma_h$ as the zero level set of $\phi_h$, a piecewise polynomial interpolant to $\phi$ on $\mathcal{T}_h$: 
\[
{\Gamma}_h:=\{\bx\in\Omega\,:\, \phi_h(\bx)=0 \}.
\]
For geometric consistency, the polynomial degree of $\phi_h$ is the same as the degree of piecewise polynomial functions we use to define trial and test spaces in a finite element formulation.  
In some applications, $\phi_h$ is recovered from a solution of a discrete indicator function equation (e.g. in the level set or the volume of fluid methods), without any direct knowledge of $\Gamma$.
  Assumptions of how well  ${\Gamma}_h$ should approximate $\Gamma$ will be given later.

The unit (outward pointing) normal to \VY{$\Gamma_h$} vector $\bn_{h}\VY{=\nabla \phi_h/|\nabla \phi_h|}$ is defined almost everywhere on \VY{$\Omega$}.
We also define
$\bP_h(\bx):=\mathbf{I}-\bn_{h}(\bx)\bn_{h}(\bx)^T$ for $\bx\in \Gamma_h,~\bx$  not on an edge.
The tangential derivative along $\Gamma_h$ is given by $\nabla_{\Gamma_h} g=\bP_h\nabla g$ for sufficiently smooth $g$ defined in a neighborhood of $\Gamma_h$.
\smallskip

Consider a subdomain $\omega_h$ of $\Omega$ consisting only of those end-level cubic cells that contain $\Gamma_h$:
\begin{equation} \label{defomeg}
	\omega_h= \bigcup_{S\in\TGh} S,\quad\text{with}~\TGh=\{S\in\T_h\,:\, S=S_T~\text{for}~ T \in \mathcal{F}_h \}.
\end{equation}
The piecewise constant function $h_S:\omega_h\rightarrow \R$ denotes the bulk cubic cell size.
 Denote by $\Sigma_h$ the set of all end-level internal faces of $\TGh$, i.e. square faces between intersected cells from $\TGh$,
\begin{equation} \label{defsigma}
	\Sigma_h= \{F\in\pa\mathcal{T}_h\,: F\in \textrm{int}(\omega_h) \}.
\end{equation}
The piecewise constant function $h_F:\Sigma_h\rightarrow \R$ denotes the face size. Since the mesh is gradually refined, $h_F=\min(h_{S_F^+},h_{S_F^-})$, where $S_F^+,S_F^-\in \TGh$ are the two bulk cells which share the end-level face $F\in \Sigma_h$.

We are also interested in the set of all faces which are intersected by $\Gamma_h$,
\begin{equation} \label{defsigmaG}
	\Sigma^\Gamma_h= \{F\in\Sigma_h\,: F \cap \Gamma_h\neq \emptyset \,  \}.
\end{equation}
Intersected faces are necessary internal, so that $\Sigma^\Gamma_h \subset \Sigma_h$, but the opposite inclusion does not hold. 

For each cell $S$, let $M_S$ be the affine mapping from the reference unit cube. Then the finite element space of order $k$ is  defined as :
\begin{align}\label{traceVhk}
 V_h^k:=\{v\in C(\omega_h)\ |\ v|_{S}\circ M_S\in Q_k\,, \forall\ S \in\TGh\},
\end{align}
where $Q_k$ is the Lagrangian finite element basis of degree $k$. In case of $k=1$, $V_h=V_h^1$ is the space of piecewise trilinear functions corresponding to the family
 \begin{align}\label{q1_family}
 Q_1=\mbox{span}\{1,x_1,x_2,x_3,x_1x_2,x_1x_3,x_2x_3,x_1x_2x_3\}.
 \end{align}
Note that we consider  $H^1$-conforming (i.e., continuous) finite elements. In this paper we restrict to $k=1,2$.
 
 Let $f^e$ be an extension of $f$ from $\Gamma$ to $\Gamma_h$. The finite element formulation reads: Find $u_h\in V_h^k$ such that
\begin{equation} \label{FEM}
\int_{\Gamma_h}	\big( \nabla_{\Gamma_h} u_h\cdot\nabla_{\Gamma_h} v_h +u_hv_h\big)\,ds +s_h( u_h, v_h) = \int_{\Gamma_h}f^e v_h\,ds\qquad\forall\,v_h\in V_h^k.
\end{equation}
Here $s_h$ is a  stabilization term defined later. The purpose of the stabilization term is to enhance the robustness of the formulation with respect to position of the position of $\Gamma_h$ in the background mesh $\T_h$. In the context of TraceFEM the idea of stabilization was first introduced in \cite{Alg1}.
\smallskip

\subsection{TraceFEM stabilizations}
\label{sec:tracefem_stab}
We are interested in the two commonly used variants of the stabilization terms $s_h$ in \eqref{FEM}. In both cases, the stabilizing term can be assembled elementwise over all end-level cubes intersected by $\Gamma_h$: 
\begin{align*}
s_h( u_h, v_h)=\sum_{S\in\TGh} s_S^\ast( u_h, v_h)\,,\VY{\quad\ast \in \{JF,JF2,NV\}}
\end{align*}
\begin{enumerate}

\item \textbf{Gradient-jump face stabilization} is the  method introduced in \cite{Alg1} following the cutFEM approach developed for the volumetric problems. In the context of the TraceFEM, this stabilization is often used with quasi-uniform bulk meshes, stationary surfaces, and  lowest order elements; see e.g.~\cite{hansbo2015characteristic,burman2016cut,burman2018cut,grande2018analysis}.  

In this variant,  local stabilizing terms are computed over cube's faces which are in the active skeleton $\eqref{defsigma}$,
\begin{align}\label{s_jf}
s_S^{JF}( u_h, v_h)= \sum_{F\in\partial S\cap\Sigma_h}\int_{F} \sigma_F \,\llbracket\nabla  u_h\rrbracket\cdot \llbracket\nabla v_h\rrbracket
\end{align}
where $\sigma_F$ is  $O(1)$ stabilization parameter, and $\llbracket\nabla  u_h\rrbracket = (\nabla  u_h)|_{S^+} - (\nabla  u_h)|_{S^-}$, $F=S^-\cap S^+$, is a ``jump'' of the gradient across the face. 
Note that for continuous FE, stabilization \eqref{s_jf} is equivalent to penalizing the jumps  of normal derivatives across faces. 

 A higher-order version of $s_h^{JF}$ was suggested in \cite{zahedi2017space} and analyzed for quasi-uniform meshes in \cite{larson2020stabilization}. For $Q_2$ elements it reads:
\begin{multline}\label{s_jf2}
s_S^{JF2}( u_h, v_h)=  s_S^{JF}( u_h, v_h) + \int_{\Gamma_h\cap S} \sigma_\Gamma(\bn_h\cdot \nabla{}u_h)( \bn_h\cdot \nabla{}v_h) \\ +\sum_{F\in\partial S\cap\Sigma_h}\int_{F}  \tilde{\sigma}_F{h^2_{F}}\,(\bn_F \cdot \llbracket\nabla^2  u_h\rrbracket \bn_F) (\bn_F\cdot \llbracket\nabla^2 v_h\rrbracket \bn_F)\\
+\int_{\Gamma_h\cap S} \tilde{\sigma}_\Gamma{h^2_{S}}\, (\bn_h\cdot (\nabla{}^2u_h) \bn_h)( \bn_h\cdot (\nabla{}^2v_h)\bn_h),
\end{multline}
where ${\sigma}_\Gamma$, $\tilde{\sigma}_F$, and $\tilde{\sigma}_\Gamma$ are $O(1)$ tuning parameter. The bilinear form $s_h^{JF2}$   stabilizes the trace finite element space $V_h^2$ in the case $Q_2$ polynomial family as shown in  \cite{larson2020stabilization}. In that paper, a more general stabilization $h^{\gamma}s_h^{JF2}$, $0\leq\gamma\leq2$, was considered and the sensitivity of the method to all stabilization parameters was explored. In our numerical results for $Q_2$ family, we choose $\sigma_F=\tilde{\sigma}_F=\tilde{\sigma}_\Gamma=\sigma_\Gamma$.

We see that the gradient-jump stabilization gets quite complicated for higher order elements. Below we consider a normal-gradient volume stabilization, which is universal with respect to the FE degree. 

\item \textbf{Normal-gradient volume stabilization} was introduced in \cite{burman2018cut,grande2018analysis} and it penalizes the variation of the FE solution in the normal direction to the surface. This property was found particularly useful for applying TraceFEM to problems posed on evolving surfaces~\cite{lehrenfeld2018stabilized} and so it is commonly used in this context~\cite{yushutin2020numerical,olshanskii2021finite,olshanskii2022tangential,olshanskii2023eulerian}. \VY{The stabilization reads}:
\begin{align}\label{s_nv}
s_S^{NV}( u_h, {\VY{v_h}})= \int_{S} \rho_S (\bn_h\cdot\nabla  u_h) (\bn_h\cdot\nabla v_h),
\end{align}
where $\rho_S$ is the stabilization parameter, constant in each cell such that 
\[
\rho_S\simeq h_S^{-1}\quad\text{for}~S\in \T_h^\Gamma.
\]
\VY{Note that $\bn_h=\nabla \phi_h/|\nabla \phi_h|$ is well-defined on $\omega_h$ so the integral in \eqref{s_nv} makes  sense.}
    \end{enumerate}

\subsection{Error indicators}
One of the goals of this paper is to construct a new TraceFEM \VY{error} estimator which does not involve complicated and expensive computations on edges $\Gamma_h\cap F$, $F\in{}\Sigma_h^\Gamma$. These edges are available only implicitly as intersections of $\Gamma_h$ with bulk faces. Moreover, one needs to construct an immersed edge quadrature on each intersected face from $\Sigma_h^\Gamma$ which is a significant computational burden. Again, note that some of \VY{the} faces from $\Sigma_h^\Gamma$ are subfaces of bulk cells which complicates the accumulation of flux jumps even further.

To this end, we define the \textit{bulk jump} indicator:
\begin{align}\label{indicator_F}
\eta_F(S_T)=\|\llbracket \nabla u_h \rrbracket\|_{L^2(\pa S_T\cap \omega_h)}\,,\quad S_T\in \TGh.
\end{align}
Note that the indicator \eqref{indicator_F} assesses the variation of the solution gradient across internal, square faces shared by the cubic cells in 
$\TGh$ rather than across the implicit edges  $\Gamma_h\cap F$, $F\in{}\Sigma_h^\Gamma$, as done in 
\cite{DD07,DemlowOlsh,Chernyshenko2015}. The former is more straightforward to compute.  Also note that \eqref{indicator_F} is  accumulated over all faces from \eqref{defsigma} rather then just the intersected faces from  \eqref{defsigmaG}. 

We will also need the \textit{surface residual} indicator,
\begin{align}\label{indicator_R}
\eta_R(T)=h_{S_T}\|f_h+\Delta_{\Gamma_h}  u_h-u_h\|_{L^2(T)}
\end{align}
which was already used in \cite{DD07,DemlowOlsh,Chernyshenko2015}. The computation of the \eqref{indicator_R} requires integration over surface cuts $T=\Gamma_h\cap S_T$, $S_T\in \TGh$ which is a standard procedure in the implementation of TraceFEM \eqref{FEM}.

Thus, for the purpose of local mesh adaptation we use the following error indicator:
\begin{equation}\label{total_indicator}
\eta(S_T):=(\alpha_r\eta_R(T)^2+\alpha_e\eta_F(S_T)^2+\alpha_s s_{S_T}^\ast(u_h,u_h))^{\frac12}.
\end{equation}
with some parameters $\alpha_r,\alpha_e, \alpha_s\ge 0$. 
\begin{remark}\label{rem:omit_bulkjump} \rm
Note that for the gradient-jump face stabilization, the solution's jumps  over faces (i.e. the $\eta_F(S_T)^2$ quantity) are included in $s_{S_T}^\ast(u_h,u_h)$ term  and so the face indicator is extra and we let $\alpha_e=0$ in the cases of $Q_1$ and $Q_2$. Otherwise, in our numerical experiments {with normal-gradient volume}, we choose $\alpha_r=\alpha_e=\alpha_s=1$.
\end{remark}

In this paper we do not consider any indicator of the geometric error resulting from the approximation $\Gamma$ and other geometric quantities. They are assumed to be of a higher order with respect to $h_S$. 

\smallskip
Results of  experiments in Section~\ref{s:num} show that the trace FE adaptive method
based on $\eta(T)$ results in the optimal convergence of the adaptive method in $H^1$ and $L^2$ norms.

\section{Reliability}\label{s:analysis}
In this section we prove an \VY{a posteriori} error estimate that implies the reliability the error indicator \eqref{total_indicator}. We start with several preliminaries.  

\subsection{Preliminaries} 

For the surface $\Gamma$, we consider its neighborhood:
\begin{equation}
\cO(\Gamma):=\{\mathbf{x}\in \mathbb{R}^3\ |\ \mathrm{dist}(\mathbf{x},\Gamma)< \tilde{c}\},\label{e:3.1}
\end{equation}
with a suitable  $\tilde{c}$ depending on $\Gamma$
such that $\omega_h\subset \cO(\Gamma)\subset\Omega$ and the normal projection  $\bp: \cO(\Gamma)\rightarrow\Gamma$,
\begin{equation*}
 \bp(\bx)=\bx-d(\bx)\bn(\bx)
\end{equation*}
is well-defined. Hereafter  $d\in C^2(\cO(\Gamma))$ denotes the signed distance function such that $d<0$ in the interior of $\Gamma$ and $d>0$ in the exterior, and  $\mathbf{n}(\mathbf{x}):=\nabla d(\mathbf{x})$ for all $\mathbf{x}\in \cO(\Gamma)$. Hence,  $\mathbf{n}$ is the normal vector on $\Gamma$ and $|\mathbf{n}(\bx)|=1$ for all $\bx\in \cO(\Gamma)$.  The Hessian of $d$ is denoted by
\begin{equation*}
 \mathbf{H}(\bx):= \nabla^2 d(\bx)\in \mathbb{R}^{3\times 3},\quad \bx\in \cO(\Gamma).
\end{equation*}
The eigenvalues of $\mathbf{H}(\bx)$ are the principal curvatures $\kappa_1(\bx)$, $\kappa_2(\bx)$, and~$0$.

We  assume the following estimates on how well $\Gamma_h$ approximates $\Gamma$:
\begin{align}
&\mathrm{ess\ sup}_{\bx\in \Gamma_h}|d(\bx)| \le c_1 h^{k+1}, \label{e:3.9}\\
&\mathrm{ess\ sup}_{\bx\in \Gamma_h}|\bn(\bx)-\bn_{h}(\bx)| \le c_2   h^k,\label{e:3.10}
\end{align}
with constants $c_1$, $c_2$ independent of $h$ and $k\in\{1,2\}$ in the FE degree.
The assumption is reasonable if $\Gamma$ is defined as the zero level of a (locally) smooth level set function $\phi$
and $\Gamma_h$ is the zero of an  $\phi_h\in V_h$,
where $\phi_h$ interpolates $\phi$ and it holds
\[
\|\phi-\phi_h\|_{L^\infty(\cO(\Gamma))}+h\|\nabla(\phi-\phi_h)\|_{L^\infty(\cO(\Gamma))}\lesssim h^{k+1}.
\]
Here and in the remainder, $A\lesssim B $ means $A\leq c\, B $ for some positive constant $c$ independent of the number of refinement levels and the position of $\Gamma_h$ in the background mesh.

\smallskip
For $\bx\in\Gamma_h$, define
 $\mu_h(\Gamma)(\bx) = (1-d(\bx)\kappa_1(\bx))(1-d(\bx)\kappa_2(\bx))\bn^T(\bx)\bn_h(\bx)$.
The surface measures  $\ds$ and $\ds_{h}$ on $\Gamma$ and $\Gamma_h$, respectively, are related \VY{\cite{DD07}} by
\begin{equation}
 \mu_h(\Gamma)(\bx)\ds_h(\bx)=\ds(\bp(\bx)),\quad \bx\in\Gamma_h. \label{e:3.16}
\end{equation}

The solution of  the Laplace--Beltrami problem and its data are defined on $\Gamma$,
while the finite element method is defined on $\Gamma_h$.
Hence, we need a suitable extension of a function from $\Gamma$ to its neighborhood.
For a function $v$ on $\Gamma$ we define
\begin{equation*}
 v^e(\bx):= v(\bp(\bx)) \quad \hbox{for all } \bx\in \cO(\Gamma).
\end{equation*}
The following formulas for this extended function are well-known (cf. section 2.3 in \cite{DD07}):
\begin{align}
 \nabla u^e(\mathbf{x}) &= (\mathbf{I}-d(\mathbf{x})\mathbf{H})\nabla_{\Gamma} u(\bp(\bx)) \quad \hbox{ in } \cO(\Gamma),\label{grad1}\\
 \nabla_{\Gamma_h} u^e(\bx) &= \bP_h(\bx)(\mathbf{I}-d(\mathbf{x})\mathbf{H})\nabla_{\Gamma} u(\bp(\bx)) \quad \hbox{ a.e. on } \Gamma_h,\label{grad2}
\end{align}
with $\mathbf{H}=\mathbf{H}(\mathbf{x})$. For $\bx\in \Gamma_h$ also define $\tilde{\bP}_{h}(\bx)= \mathbf{I}-\bn_h(\bx)\bn(\bx)^T/(\bn_h(\bx)\cdot\bn(\bx))$.
 One can represent the surface gradient of $u\in H^1(\Gamma)$ in terms of $\nath u^e$ as follows
\begin{equation} \label{hhl}
 \nat u(\bp(\bx))=(\mathbf{I}-d(\bx)\mathbf{H}(\bx))^{-1} \tilde{\bP}_{h}(\bx) \nath u^e(\bx)~~ \hbox{ a.e. }\bx\in \Gamma_h.
\end{equation}
Due to \eqref{e:3.16} and \eqref{hhl}, one gets
\begin{equation}\label{aux13}
 \int_{\Gamma}\nat u\nat v\, \ds=\int_{\Gamma_h} \mathbf{A}_h\nath u^e\nath v^e \, \ds_h \quad \hbox{for all } v\in H^1(\Gamma),
\end{equation}
with $\mathbf{A}_h(\bx)=\mu_h(\bx) \tilde{\bP}^T_h(\bx)(\mathbf{I}-d(\bx)\mathbf{H}(\bx))^{-2}\tilde{\bP}_h(\bx)$.

For  sufficiently smooth $u$ and $|\mu| \leq 2$, it holds (cf. Lemma 3 in \cite{Dziuk88}):
 \begin{equation}
  |D^{\mu} u^e(\bx)|\lesssim \left(\sum_{|\mu|=2}|D_{\Gamma}^{\mu} u(\bp(\bx))| + |\nabla_{\Gamma} u(\bp(\bx))|\right)\quad \hbox{in } \cO(\Gamma),
\label{e:3.13}
 \end{equation}
 
We need the following uniform trace inequalities. For any end level cell $S\subset\omega_h$ and its face $F\subset S$ it holds 
\begin{align}\label{eq_trace}
\|v\|_{L^2(S\cap \Gamma_h)}^2\lesssim h^{-1}_S\|v\|_{L^2(S)}^2+h_S\|\nabla v\|_{L^2(S)}^2\quad \forall~v\in H^1(S).\\
\label{eq_trace2}
\|v\|_{L^2(F\cap \Gamma_h)}^2\lesssim h^{-1}_F\|v\|_{L^2(F)}^2+h_F\|\nabla v\|_{L^2(F)}^2\quad \forall~v\in H^1(F).
\end{align}
Note that for graded octree meshes it holds $h_F\simeq h_S$. 
The proof of \eqref{eq_trace} follows by subdividing any cubic cell into a finite number of regular tetrahedra
and further applying Lemma~4.2 from \cite{Hansbo2003} on each of these tetrahedra. Similar  procedure is applied to prove \eqref{eq_trace2}. 

We will use the following notation
\[
a_h(u,v):= \int_{\Gamma_h}	 (\nabla_{\Gamma_h} u\cdot\nabla_{\Gamma_h} v +uv)\,d\bs_h.
\]

\subsection{A posteriori estimate}\label{s_adapt} 
In this section, we deduce  an a posteriori error estimate for the TraceFEM ~\eqref{FEM}. 
For the sake of analysis we make the following \emph{assumptions}:  
\begin{description}
\item{(i)} The octree mesh is \VY{gradually} refined; 
\item{(ii)} For any $\bs\in \Gamma$ denote by $K(\bs)$ a number of end-level cubic cells from $\omega_h$ intersected by the line $\ell(\bs)=\{\bx\in\mathcal{O}(\Gamma):\,\bp(\bx)=\bs\}$. We assume $K(\bs)\le K$ with a constant $K$ independent of $\bs$ and the number of refinement levels.  
\end{description}

\VY{In practice, the first assumption can be satisfied by triggering the refinement of any cell which has a finer neighbor already marked for refinement. The second assumption does not pose any practical restrictions  and in experiments we observed that $K(\bs)$ is small for all $\bs$ sampled for testing. An explanation of 
why Assumption (ii) is reasonable relies on the smoothness of $\Gamma$ and the use of gradually refined meshes. Indeed for a $C^2$ surface, one can choose such $O(1)$ neighborhood  $\mathcal{O}(\Gamma)$ that $\ell(\bs)$ intersects $\Gamma$ only once at point $\bs$. Assume $K(\bs)\to\infty$ with a mesh refinement for some $\bs\in\Gamma$. Since the end-level cells are getting arbitrary small, this implies that $\ell(\bs)$ intersects or touches $\Gamma$ at the point of accumulation. However, such only point can be $\bs$ and  $\ell(\bs)\perp \Gamma(\bs)$ while $\Gamma$ is increasingly flat in the local (mesh) scale. For a graded mesh, this may result only in a finite number  of intersected end-level cells. 
}

Consider the surface finite element error $e_h=u^e-u_h$ in $\omega_h$. By $e_h^l$ we denote the lift of the error function on $\cO(\Gamma)$, $e_h^l(\bx)=u(\bp(\bx))-u_h(\bs)$ with $\bs\in\Gamma_h$ such that $\bp(\bs)=\bp(\bx)$.
Note that $e_h^l$ is constant in normal directions to $\Gamma$, i.e. $e_h^l=(e_h^l|_\Gamma)^e$.
Further we prove an a posteriori bound for the augmented $H^1$-norm of $e_h^l$ on $\Gamma$, i.e. for 
\begin{equation}
\label{3.1}
\tnorm{e_h}^2= a( e_h^l,  e_h^l)+s_h(e_h, e_h),\quad\text{with}~a( u,v)=\int_\Gamma(\nabla_\Gamma u\cdot \nabla_\Gamma v + uv )\,ds.
\end{equation}
Using straightforward calculations and \eqref{aux13} one checks the following identities for any $\psi_h\in V_h$
\begin{equation}\label{s4_e1}
\begin{split}
\tnorm{e_h}^2&= \int_\Gamma f e_h^l\,d\bs-a(u_h^l,   e_h^l)+s_h(e_h, e_h)\\
&= \int_{\Gamma_h} f^e e_h\mu_h\,d\bs_h-\int_{\Gamma_h} f_h \psi_h\,d\bs_h+a_h(u_h,  \psi_h)+s_h(u_h,  \psi_h)-a(u_h^l,   e_h^l)+s_h(e_h, e_h)\\
&=\int_{\Gamma_h} (f^e\mu_h-f_h) e_h\,d\bs_h +\int_{\Gamma_h} f_h(e_h-\psi_h)\,d\bs_h+a_h(u_h,  \psi_h- e_h) +s_h(u_h, \psi_h-e_h)
\\&\qquad -\int_{\Gamma_h}(\mathbf{A}_h-\bP_h)\nabla_{\Gamma_h} u_h \cdot \nabla_{\Gamma_h} e_h\, \ds_h.
\end{split}
\end{equation}

Element-wise integration by parts for the third term on the right hand side of \eqref{s4_e1} gives
\begin{equation}\label{s4_e2}
a_h(u_h,  \psi_h-e_h)= \int_{\Gamma_h} (\Delta_{\Gamma_h}  u_h-u_h)( e_h-\psi_h) \d \bs_h
 -\frac12\sum_{T\in \F_h}\int_{\partial T}
\llbracket \unablah u_h \rrbracket ( e_h-\psi_h) \d\br.
\end{equation}
The Cauchy inequality gives
\[
s_h(u_h, \psi_h-e_h)\le \left(\sum_{S\in \TGh} s_S^\ast(u_h,u_h)\right)^{\frac12}
\left(\sum_{S\in \TGh} s_S^\ast(\psi_h-e_h,\psi_h-e_h)
\right)^{\frac12}.
\]

 Substituting \eqref{s4_e2} into \eqref{s4_e1} and applying the Cauchy inequality elementwise over $\F_h$ to estimate integrals, we get
\begin{equation}\label{s4_e3}
\begin{split}
\tnorm{e_h}^2&\lesssim
\sum_{T\in \F_h} \left(\|f^e\mu_h-f_h\|_{L^2(T)}+
\|\mathbf{A}_h-\bP_h\|_{L^\infty(T)}\|\nabla_{\Gamma_h} u_h\|_{L^2(T)}\right)\| e_h\|_{H^1(\Gamma_h)} 
\\&+
\left(\sum_{T\in \F_h}\eta_R(T)^2\right)^{\frac12}
\left(\sum_{T\in \F_h}h^{-2}_{S_T}\| e_h-\psi_h\|_{L^2(T)}^2
\right)^{\frac12}\\
&+
\left(\sum_{T\in \F_h} h_{S_T}\|\llbracket \unablah u_h \rrbracket\|^2_{\partial T} \right)^{\frac12}
\left(\sum_{T\in \F_h} h_{S_T}^{-1}\| e_h-\psi_h\|_{L^2(\partial T)}^2
\right)^{\frac12} \\
&+
\left(\sum_{S\in \TGh} s_S^\ast(u_h,u_h)\right)^{\frac12}
\left(\sum_{S\in \TGh} s_S^\ast(\psi_h-e_h,\psi_h-e_h)
\right)^{\frac12}.
\end{split}
\end{equation}
To proceed further we need several results, which we split into a few lemmas.

\begin{lemma}\label{L3} For all $T\in\mathcal{F}_h$ it holds 
\begin{equation}\label{eq:l1_new}
 h_{S_T}\|\llbracket \unablah u_h\rrbracket\|^2_{L^2(\partial T)} \lesssim 
  \eta_F(S_T)^2.
\end{equation}

\end{lemma}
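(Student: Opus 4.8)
The plan is to prove \eqref{eq:l1_new} by a face-by-face argument that first replaces the jump of the \emph{tangential} gradient over the curved edges $\partial T=\Gamma_h\cap\partial S_T$ by the jump of the \emph{full} gradient, and then trades the one-dimensional integral over the implicit edge $F\cap\Gamma_h$ for a two-dimensional integral over the flat square face $F$, at the price of the weight $h_{S_T}$. The key structural observation is that $u_h$ is $H^1$-conforming, so on a face $F=S^+\cap S^-$ the tangential-to-$F$ part of $\nabla u_h$ is continuous and the jump $\llbracket\nabla u_h\rrbracket$ is purely normal to $F$; in particular $\llbracket\nabla u_h\rrbracket|_F$ is a vector-valued polynomial, being a difference of traces of the $Q_k$ gradient from the two adjacent cells, to which the polynomial trace and inverse estimates apply.

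First I would localize: since $\partial T\subset\bigcup_{F\subset\partial S_T}(F\cap\Gamma_h)$, it suffices to bound the contribution of each face $F$ of $S_T$ lying in $\omega_h$. On $F\cap\Gamma_h$ I would bound the tangential jump by the full jump, using $\unablah u_h=\bP_h\nabla u_h$ and the fact that $\bP_h$ is an orthogonal projection, so that $|\llbracket\unablah u_h\rrbracket|\lesssim|\llbracket\nabla u_h\rrbracket|$ pointwise. Next I would apply the face trace inequality \eqref{eq_trace2} componentwise to $v=\llbracket\nabla u_h\rrbracket$, followed by an inverse estimate on $F$, valid because $\llbracket\nabla u_h\rrbracket$ is a polynomial of fixed degree on a cube of size $h_F$, which converts the term $h_F\|\nabla\llbracket\nabla u_h\rrbracket\|_{L^2(F)}^2$ into $h_F^{-1}\|\llbracket\nabla u_h\rrbracket\|_{L^2(F)}^2$ and yields $\|\llbracket\nabla u_h\rrbracket\|_{L^2(F\cap\Gamma_h)}^2\lesssim h_F^{-1}\|\llbracket\nabla u_h\rrbracket\|_{L^2(F)}^2$. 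Multiplying by $h_{S_T}$ and using the grading relation $h_{S_T}\simeq h_F$ cancels the weight, and summing over all faces $F\subset\partial S_T\cap\omega_h$ reassembles exactly $\eta_F(S_T)^2$ from \eqref{indicator_F}.

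I expect the main obstacle to be the step relating $\llbracket\unablah u_h\rrbracket$ to $\llbracket\nabla u_h\rrbracket$, because $\bP_h=\mathbf I-\bn_h\bn_h^T$ is built from $\bn_h=\nabla\phi_h/|\nabla\phi_h|$, which jumps across $F$ since $\Gamma_h$ has a kink there. Writing $\bP_h^+\nabla u_h^+-\bP_h^-\nabla u_h^-=\tfrac12(\bP_h^++\bP_h^-)\llbracket\nabla u_h\rrbracket+\tfrac12(\bP_h^+-\bP_h^-)(\nabla u_h^++\nabla u_h^-)$ produces, besides the clean term controlled by $|\llbracket\nabla u_h\rrbracket|$, a consistency term of size $|\bP_h^+-\bP_h^-|\lesssim|\bn_h^+-\bn_h^-|\lesssim h^k$ by \eqref{e:3.10}. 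I would argue that this extra contribution is a geometric error of order $h^k$ relative to $\|\nabla u_h\|$, hence of higher order and negligible under the standing assumption that geometric errors are dominated by the approximation error (Section~\ref{s_FEM}), so that the effective bound $|\llbracket\unablah u_h\rrbracket|\lesssim|\llbracket\nabla u_h\rrbracket|$ suffices for \eqref{eq:l1_new}. A secondary technical point is that octree grading may split a face of $S_T$ into hanging subfaces; here the relation $h_F=\min(h_{S_F^+},h_{S_F^-})\simeq h_{S_T}$ guarantees that the constants in the trace and inverse inequalities stay uniform.
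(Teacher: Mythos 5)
Your argument is correct and is essentially the paper's own proof: both rest on the uniform trace inequality \eqref{eq_trace2} applied on the flat faces, followed by an inverse estimate to absorb the derivative term and the grading relation $h_F\simeq h_{S_T}$, summed over the faces in $\partial S_T\cap\Sigma_h$. The only difference is bookkeeping: the paper applies trace and inverse estimates directly to $\llbracket\bP_h\nabla u_h\rrbracket$, viewed as a rational function of bounded degree on each face, whereas you first pass to the polynomial jump $\llbracket\nabla u_h\rrbracket$; your explicit splitting that isolates the $O(h^k)$ jump of $\bP_h$ across $F$ addresses a point the paper's proof silently glosses over, and your appeal to the standing assumption that geometric errors are of higher order is the right way to close it.
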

\begin{proof}
Recall that the face-based indicator $\eta_F(S_T)$ for a cell $S_T$ includes all internal faces $F\in \pa{}S_T \cap \Sigma_h$ rather than only  faces from $\pa{}S_T\cap \Sigma_h^\Gamma$.
Also note that $\llbracket \unablah u_h \rrbracket=\llbracket\bP_h\nabla u_h\rrbracket$ is a rational function of a finite degree on each face of $S_T$. Application of the uniform trace estimate~\eqref{eq_trace2} followed by the FE inverse estimate on each face $F\subset \pa{}S_T\cap \Sigma_h^\Gamma$ gives the assertion.
\end{proof}

\begin{lemma} The following bound holds for both stabilizations and FE degrees: 
\begin{equation}\label{eq:s_bound}
 s_S^\ast(\psi_h-e_h,\psi_h-e_h)\lesssim s_S^\ast(e_h,e_h) + h^{-1}_S\|\nabla
\psi_h\|^2_{L^2(\omega(S))}.
\end{equation}
\VY{where $\omega(S)$ denotes a union of cubic cells from $\omega_h$ sharing faces with $S$.}
\end{lemma}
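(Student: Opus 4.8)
The plan is to use that every $s_S^\ast$ is a symmetric, positive semi-definite bilinear form, so that $w\mapsto s_S^\ast(w,w)^{1/2}$ is a seminorm. The triangle inequality together with $(a+b)^2\le 2a^2+2b^2$ then gives
\[
s_S^\ast(\psi_h-e_h,\psi_h-e_h)\le 2\,s_S^\ast(e_h,e_h)+2\,s_S^\ast(\psi_h,\psi_h),
\]
which already produces the first term on the right of \eqref{eq:s_bound}. It remains to establish the purely discrete bound $s_S^\ast(\psi_h,\psi_h)\lesssim h_S^{-1}\|\nabla\psi_h\|_{L^2(\omega(S))}^2$ for $\psi_h\in V_h^k$, for each admissible $\ast$ and each degree $k\in\{1,2\}$. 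The common mechanism is to convert the face and surface integrals that define $s_S^\ast$ into volume integrals over cells of the patch $\omega(S)$, picking up one factor $h_S^{-1}$ from a trace inequality and trading every extra derivative for a factor $h_S^{-1}$ via the finite element inverse estimate $\|\nabla^{m+1}\psi_h\|_{L^2(S)}\lesssim h_S^{-1}\|\nabla^m\psi_h\|_{L^2(S)}$.

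For the normal-gradient volume stabilization the bound is immediate: since $|\bn_h|=1$ we have $(\bn_h\cdot\nabla\psi_h)^2\le|\nabla\psi_h|^2$ pointwise, and $\rho_S\simeq h_S^{-1}$ gives $s_S^{NV}(\psi_h,\psi_h)\le \rho_S\|\nabla\psi_h\|_{L^2(S)}^2\lesssim h_S^{-1}\|\nabla\psi_h\|_{L^2(\omega(S))}^2$. For the gradient-jump face stabilization $s_S^{JF}$ I would bound each jump by its two one-sided traces, $|\llbracket\nabla\psi_h\rrbracket|^2\lesssim |(\nabla\psi_h)|_{S^+}|^2+|(\nabla\psi_h)|_{S^-}|^2$, and then apply the discrete trace inequality $\|\nabla\psi_h\|_{L^2(F)}^2\lesssim h_S^{-1}\|\nabla\psi_h\|_{L^2(S)}^2$ on each of the two cells adjacent to $F$. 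With $\sigma_F=O(1)$ and $h_F\simeq h_S$, summing over $F\in\partial S\cap\Sigma_h$ and using that the adjacent cells belong to $\omega(S)$ yields the claim.

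The technical core is the higher-order face stabilization $s_S^{JF2}$ for $Q_2$, which in addition to the $s_S^{JF}$ term contains a surface normal-gradient term on $\Gamma_h\cap S$, a face term weighted by $h_F^2$ involving $\llbracket\nabla^2\psi_h\rrbracket$, and a surface term weighted by $h_S^2$ involving $\nabla^2\psi_h$. Discarding the bounded factors $\bn_h$ (using $|\bn_h|=1$) I would invoke the uniform trace inequality \eqref{eq_trace} for the two surface integrals, the discrete trace inequality as above for the face integral, and then use the inverse estimate repeatedly. The main obstacle is essentially the bookkeeping of the $h_S$ powers: one must verify that the explicit weights $h_F^2\simeq h_S^2$ and $h_S^2$ exactly absorb the extra derivatives and the trace factor. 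For instance, the second-derivative face term scales as $h_S^2\cdot h_S^{-1}\cdot h_S^{-2}=h_S^{-1}$ after one trace and two inverse estimates, while the $h_S^2$-weighted surface term produces $h_S^2\big(h_S^{-1}\|\nabla^2\psi_h\|_{L^2(S)}^2+h_S\|\nabla^3\psi_h\|_{L^2(S)}^2\big)$, and since $\|\nabla^2\psi_h\|_{L^2(S)}^2\lesssim h_S^{-2}\|\nabla\psi_h\|_{L^2(S)}^2$ and $\|\nabla^3\psi_h\|_{L^2(S)}^2\lesssim h_S^{-4}\|\nabla\psi_h\|_{L^2(S)}^2$ (third derivatives of $Q_2$ functions need not vanish, but the inverse estimate still applies), both contributions again reduce to $h_S^{-1}\|\nabla\psi_h\|_{L^2(S)}^2$. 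A final point, tacitly used throughout, is the graded-mesh geometry: a face $F\in\Sigma_h$ may be a subface of a coarser neighbour, but since neighbouring cell sizes differ by at most a factor $2$ we have $h_F\simeq h_S$ on both sides and every adjacent cell lies in $\omega(S)$, so all constants remain independent of the refinement level. Collecting the three stabilizations and both degrees gives \eqref{eq:s_bound}.
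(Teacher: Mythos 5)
Your proposal is correct and follows essentially the same route as the paper's proof: the triangle inequality to reduce to bounding $s_S^\ast(\psi_h,\psi_h)$, then the trace inequality \eqref{eq_trace} for the surface terms, the discrete trace inequality for the face terms, and repeated inverse estimates to absorb the $h_F^2$, $h_S^2$ weights, yielding $s_S^\ast(\psi_h,\psi_h)\lesssim h_S^{-1}\|\nabla\psi_h\|^2_{L^2(\omega(S))}$ in all cases. Your power-of-$h$ bookkeeping for the $s_S^{JF2}$ terms matches the paper's computations exactly.
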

\begin{proof} We first apply the triangle inequality to show
\begin{equation}\label{eq:639}
    s_S^\ast(\psi_h-e_h,\psi_h-e_h)\le 2(s_S^\ast(e_h,e_h) + s_S^\ast(\psi_h,\psi_h))
\end{equation}
We need to estimate the second term on the right-hand side. For the gradient-jump stabilization and $k=2$ we have
\begin{multline}\label{eq:643}
s_S^{JF2}(\psi_h,\psi_h)= \sigma_\Gamma\|\bn_h\cdot \nabla{}\psi_h\|^2_{L^2(\Gamma_h\cap S)}+\tilde{\sigma}_\Gamma{h^2_{S}}\|\bn_h\cdot (\nabla{}^2\psi_h) \bn_h\|^2_{L^2(\Gamma_h\cap S)}  \\ +\sum_{F\in\partial S\cap\Sigma_h}\left( \sigma_F \,\|\nabla  \psi_h\rrbracket\|^2_{L^2(F)}  +  \tilde{\sigma}_F{h^2_{F}}\|\bn_F \cdot \llbracket\nabla^2  \psi_h\rrbracket \bn_F\|^2_{L^2(F)} \right).
\end{multline}

To estimate the first two terms on the right-hand side of \eqref{eq:643}, we apply  the trace estimate \eqref{eq_trace}: 
\begin{equation}\label{eq:648}
\begin{split}
   &\|\bn_h\cdot \nabla{}\psi_h\|^2_{L^2(\Gamma_h\cap S)} \le \|\nabla{}\psi_h\|^2_{L^2(\Gamma_h\cap S)}
   \lesssim h_{S}^{-1}  \|\nabla\psi_h\|^2_{L^2(S)} + h_{S}  \|\nabla^2\psi_h\|^2_{L^2(S)}
   \lesssim h_{S}^{-1}  \|\nabla\psi_h\|^2_{L^2(S)} \\
 {h^2_{S}}&\|\bn_h\cdot (\nabla{}^2\psi_h) \bn_h\|^2_{L^2(\Gamma_h\cap S)} 
 \le {h^2_{S}}\|\nabla{}^2\psi_h\|^2_{L^2(\Gamma_h\cap S)} 
 \lesssim {h_{S}}\|\nabla{}^2\psi_h\|^2_{L^2(S)} \lesssim h_{S}^{-1}\|\nabla\psi_h\|^2_{L^2(S)}.
\end{split}
\end{equation}
To estimate the third and fourth terms on the right-hand side of \eqref{eq:643}, we apply the  finite element trace and inverse inequalities:
\begin{equation}\label{eq:658}
\begin{split}
 \sum_{F\in\partial S\cap\Sigma_h}\sigma_F \,\|\llbracket \nabla  \psi_h\rrbracket\|^2_{L^2(F)} &\lesssim  \|\llbracket \nabla\psi_h\rrbracket\|^2_{L^2(\partial S\cap\Sigma_h)} \lesssim h_{S}^{-1}  \|\nabla\psi_h\|^2_{L^2(\omega(S))}\\
   \sum_{F\in\partial S\cap\Sigma_h}  {h^2_{F}}\|\bn_F \cdot \llbracket\nabla^2  \psi_h\rrbracket \bn_F\|^2_{L^2(F)}& 
 \lesssim  h^2_{S}\|\llbracket\nabla^2  \psi_h\rrbracket\|^2_{L^2(\partial S\cap\Sigma_h)}
  \lesssim  h_{S}\|\nabla^2  \psi_h\|^2_{L^2(\omega(S))}\\
  &\lesssim h_{S}^{-1}  \|\nabla\psi_h\|^2_{L^2(\omega(S))}.
\end{split}
\end{equation}
The combination of \eqref{eq:643}--\eqref{eq:658} gives
\begin{equation}\label{eq:670}
 s_S^{JF2}(\psi_h,\psi_h)\lesssim  h_{S}^{-1}  \|\nabla\psi_h\|^2_{L^2(\omega(S))}.  
\end{equation}
Of course, the same bound \eqref{eq:670} holds also for $k=1$. 
For the normal-volume stabilization we have
\begin{equation}\label{eq:671}
s_S^{NV}(\psi_h,\psi_h)= \rho_S\|\bn_h\cdot \nabla{}\psi_h\|^2_{L^2(S)}\lesssim
 h^{-1}_S\|\bn_h\cdot \nabla{}\psi_h\|^2_{L^2(S)},
\end{equation}
where we used that $\rho_S$ is an $O(h^{-1}_S)$ \VY{parameter}. Substituting \eqref{eq:670} and \eqref{eq:671} \VY{in} \eqref{eq:639} proves the lemma.\\
\end{proof}

 Due to geometric approximation properties \eqref{e:3.9}, \eqref{e:3.10} and ``lifting'' identities \eqref{e:3.16} and \eqref{grad2}
we have
\begin{equation}\label{aux25}
\| e_h\|_{H^1(\Gamma_h)} \lesssim \| e_h^l\|_{H^1(\Gamma)}.
\end{equation}

\begin{lemma} There  exist $\psi_h\in V_h$ such that
\begin{equation}\label{A2}
\sum_{T\in \F_h}\left[h^{-2}_{S_T}\| e_h-\psi_h\|_{L^2(T)}^2
+h_{S_T}^{-1}\| e_h-\psi_h\|_{L^2(\partial T)}^2
+s_S^\ast(\psi_h-e_h,\psi_h-e_h)
\right]\lesssim \tnorm{e_h}^2.
\end{equation}
\end{lemma}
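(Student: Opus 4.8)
The statement is a quasi-interpolation (surface Scott--Zhang/Cl\'ement) estimate, and the plan is to build $\psi_h$ as a bulk quasi-interpolant of the lifted error and to control the three groups of terms by the trace inequalities \eqref{eq_trace}, \eqref{eq_trace2}, the geometric identity \eqref{grad1}, the norm equivalence \eqref{aux25}, and Assumption (ii). Concretely, I would take $\psi_h=I_h e_h^l\in V_h$, where $I_h$ is a Scott--Zhang type operator on $\TGh$ and $e_h^l$ is the normal extension from Section~\ref{s_adapt}, so that $e_h^l=(e_h^l|_\Gamma)^e$ and, as already observed, $e_h^l=e_h$ on $\Gamma_h$. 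I will use the standard local bounds $\|e_h^l-\psi_h\|_{L^2(S)}\lesssim h_S\|\nabla e_h^l\|_{L^2(\omega(S))}$, $\|\nabla(e_h^l-\psi_h)\|_{L^2(S)}\lesssim\|\nabla e_h^l\|_{L^2(\omega(S))}$, and the stability $\|\nabla\psi_h\|_{L^2(S)}\lesssim\|\nabla e_h^l\|_{L^2(\omega(S))}$, all with finite patch overlap on the graded mesh. The one recurring estimate I would prove first is the bulk-to-surface conversion $\sum_{S\in\TGh}h_S^{-1}\|\nabla e_h^l\|_{L^2(S)}^2\lesssim\|\nat e_h^l\|_{L^2(\Gamma)}^2\le\tnorm{e_h}^2$: by \eqref{grad1} and $|d|\lesssim h_S$ on $\omega_h$ one has $|\nabla e_h^l(\bx)|\lesssim|\nat e_h^l(\bp(\bx))|$, and integrating along the normal fibres (each of length $O(h_S)$) while using Assumption (ii) to cap the number of cells met by a fibre by $K$ yields the claim with an $O(K)$ constant.

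With that estimate in hand, the first and third terms of \eqref{A2} are routine. For the $L^2$-on-$T$ term I set $w:=e_h^l-\psi_h$ (so $w=e_h-\psi_h$ on $\Gamma_h$) and apply the bulk trace \eqref{eq_trace} to get $\|w\|_{L^2(T)}^2\lesssim h_{S_T}^{-1}\|w\|_{L^2(S_T)}^2+h_{S_T}\|\nabla w\|_{L^2(S_T)}^2$; inserting the two local approximation bounds gives $h_{S_T}^{-2}\|w\|_{L^2(T)}^2\lesssim h_{S_T}^{-1}\|\nabla e_h^l\|_{L^2(\omega(S_T))}^2$, and summing with the conversion estimate bounds this sum by $\tnorm{e_h}^2$. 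The stabilization term is already reduced by \eqref{eq:s_bound}: summing over $S\in\TGh$ gives $\sum_S s_S^\ast(\psi_h-e_h,\psi_h-e_h)\lesssim s_h(e_h,e_h)+\sum_S h_S^{-1}\|\nabla\psi_h\|_{L^2(\omega(S))}^2$, where $s_h(e_h,e_h)\le\tnorm{e_h}^2$ by definition \eqref{3.1}, and the second piece is $\lesssim\|\nat e_h^l\|_{L^2(\Gamma)}^2\le\tnorm{e_h}^2$ by the stability of $I_h$ and the conversion estimate.

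The edge term is the crux. The clean route is to reduce $\partial T$ to interior quantities by a trace inequality on the surface element $T$, namely $\|w\|_{L^2(\partial T)}^2\lesssim h_{S_T}^{-1}\|w\|_{L^2(T)}^2+h_{S_T}\|\nath w\|_{L^2(T)}^2$, so that $h_{S_T}^{-1}\|w\|_{L^2(\partial T)}^2\lesssim h_{S_T}^{-2}\|w\|_{L^2(T)}^2+\|\nath w\|_{L^2(T)}^2$; the first summand is exactly the already-controlled $L^2$-on-$T$ term. For the second I would deliberately avoid second derivatives of $e_h$ (which $\tnorm{e_h}$ does not control) by the split $\|\nath w\|_{L^2(T)}\le\|\nath e_h\|_{L^2(T)}+\|\nath\psi_h\|_{L^2(T)}$: summed over $T$, the first part equals $\|\nath e_h\|_{L^2(\Gamma_h)}^2\lesssim\|e_h^l\|_{H^1(\Gamma)}^2\le\tnorm{e_h}^2$ by \eqref{aux25}, while for the polynomial part $\|\nath\psi_h\|_{L^2(T)}^2\le\|\nabla\psi_h\|_{L^2(\Gamma_h\cap S_T)}^2\lesssim h_{S_T}^{-1}\|\nabla\psi_h\|_{L^2(S_T)}^2$ follows from \eqref{eq_trace} and an inverse estimate, after which the conversion estimate closes the bound.

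The main obstacle is precisely the trace inequality from $T$ to $\partial T$ on the curved, implicitly defined element $T=\Gamma_h\cap S_T$: since $T$ may be an arbitrarily small sliver, this bound must hold with a constant independent of the cut position, which is exactly the difficulty that the bulk traces \eqref{eq_trace}--\eqref{eq_trace2} were engineered to bypass. I therefore expect the genuine work to be either establishing such a cut-independent surface-element trace, or routing $\partial T$ through the flat faces via \eqref{eq_trace2} while controlling the in-face gradient of $w$ without paying for $\nabla^2 e_h$; the split of $\nath w$ into the surface energy of $e_h$ plus an inverse estimate for $\psi_h$ is what keeps the whole argument first order. The remaining ingredients---the local Scott--Zhang bounds, the bulk trace inequalities, and the fibre/Assumption~(ii) conversion---are routine once the conversion estimate is established.
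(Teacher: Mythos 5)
Your construction of $\psi_h$ and your treatment of two of the three terms track the paper's actual proof closely: the paper also takes a Scott--Zhang type interpolant of the lifted error $e_h^l$ on the graded octree mesh (citing a hanging-node variant of the operator), handles the $L^2(T)$ term with the bulk trace inequality \eqref{eq_trace} plus the local approximation bounds \eqref{A2_2}, bounds the stabilization term through \eqref{eq:s_bound} together with $s_h(e_h,e_h)\le\tnorm{e_h}^2$, and closes all sums with precisely the bulk-to-surface conversion and finite-overlap counting you describe; this is where assumptions (i) and (ii) enter, via \eqref{aux599} and \eqref{A2_4}. Up to that point there is no disagreement.

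The gap is the edge term, and you named it yourself without closing it. The trace inequality $\|w\|^2_{L^2(\partial T)}\lesssim h_{S_T}^{-1}\|w\|^2_{L^2(T)}+h_{S_T}\|\nath w\|^2_{L^2(T)}$ cannot hold with a cut-independent constant: $T=\Gamma_h\cap S_T$ may be a sliver whose area is arbitrarily small relative to the length of $\partial T$, so the inequality degenerates exactly for the bad cuts, and no argument built on it can be uniform in the position of $\Gamma_h$. Your fallback of routing $\partial T$ through the planar faces with \eqref{eq_trace2} also fails as stated: \eqref{eq_trace2} needs in-face $H^1$ control of $w$, but $e_h^l$ restricted to a face is only an $H^{1/2}$ trace in general, and its in-face gradient cannot be bounded by surface quantities uniformly because the normal projection $\bp$ restricted to a face degenerates whenever the surface crosses that face nearly orthogonally. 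The idea you are missing is to \emph{enlarge} the domain of the trace inequality rather than shrink it to $T$ or tilt it onto a face: for each curved edge $e\subset\partial T$ the paper takes the natural polynomial extension $\tilde\phi_h$ of $\phi_h|_{S_T}$ to the patch $\omega(e)$ of cells touching the face containing $e$, and works on the locally extended discrete surface $\widetilde{\Gamma}_h(e)=\{\tilde\phi_h=0\}$. By mesh gradedness, $e$ has an $h_{S_T}/2$ neighborhood inside $\widetilde{\Gamma}_h(e)$, so the trace inequality \eqref{aux590} holds on a patch of uniform size regardless of how $\Gamma_h$ cuts $S_T$; moreover $\widetilde{\Gamma}_h(e)$ still satisfies \eqref{e:3.9}--\eqref{e:3.10}, so it is a graph over $\Gamma$ under $\bp$ and the lifting bound \eqref{aux599-2} is available. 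After that, the proof proceeds exactly as you intended for $T$ itself: split $\nabla_{\widetilde{\Gamma}_h}(e_h^l-\psi_h)$ into the $e_h^l$ part, lifted back to $\Gamma$ by \eqref{aux599-2}, and the $\psi_h$ part, absorbed by trace plus inverse estimates. So your architecture is right and first order as claimed, but the single estimate you flagged as ``the genuine work'' is indeed the heart of the lemma, and its resolution is the local surface extension, not a trace inequality on the cut element.
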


\begin{proof}
 To handle the edge term on the left-hand side of \eqref{A2}, we need some further constructions: For a curved edge $e\subset\partial T$ denote by $F_e\subset \partial S_T$ the face of $S_T$ such that $e\subset F_e$. Denote by $\omega(e)\subset\T_h$ the set of all cubic cells touching $F_e$.  Let $\tilde\phi_h$ be the natural polynomial extension of the level-set function $\phi_h|_{S_T}$ and $\widetilde{\Gamma}_h(e)=\{\bx\in\omega(e)\,:\,\tilde\phi_h(\bx)=0\}$ be a  smooth approximation of $\Gamma$ locally in $\omega(e)$. Note that due to the graded refinement assumption there is a $h_{S_T}/2$ neighborhood of $e$ in $\widetilde{\Gamma}_h(e)$.
Then for $\rho\in H^1(\widetilde{\Gamma}_h(e))$ in holds
\begin{equation}\label{aux590}
\|\rho\|_{L^2(e)}^2 \lesssim h^{-1}_{S_T}\|\rho\|_{L^2(\widetilde{\Gamma}_h(e))}^2 +h_{S_T}\|\nabla_{\widetilde{\Gamma}_h(e)}\rho\|_{L^2(\widetilde{\Gamma}_h(e))}^2.
\end{equation}
The estimate \eqref{aux590} follows from a standard flattening argument and applying a trace inequality as in \eqref{eq_trace2}.

We apply the bulk and \eqref{eq_trace}  trace inequalities and \eqref{aux590}  to estimate
\begin{equation}\label{A2_1}
\begin{split}
h^{-2}_{S_T}\| e_h-\psi_h\|_{L^2(T)}^2&+\sum_{e\in\partial T}h_{S_T}^{-1}\| e_h-\psi_h\|_{L^2(e)}^2\\ &\lesssim
h^{-3}_{S_T}\| e_h^l-\psi_h\|_{L^2(S_T)}^2+h^{-1}_{S_T}\|\nabla( e_h^l-\psi_h)\|_{L^2(S_T)}^2\\
&\qquad+\sum_{e\in\partial T}\left(h_{S_T}^{-2}\| e_h^l-\psi_h\|_{L^2(\widetilde{\Gamma}_h(e))}^2+\|\nabla_{\widetilde{\Gamma}_h}( e_h^l-\psi_h)\|_{L^2(\widetilde{\Gamma}_h(e))}^2\right)\\
&\lesssim
h^{-3}_{S_T}\| e_h^l-\psi_h\|_{L^2(\omega(e))}^2+h^{-1}_{S_T}\|\nabla( e_h^l-\psi_h)\|_{L^2(\omega(e))}^2 \\
&\qquad +\sum_{e\in\partial T}\left(\|\nabla_{\widetilde{\Gamma}_h} e_h^l\|_{L^2(\widetilde{\Gamma}_h(e))}^2+\|\nabla\psi_h\|_{L^2(\widetilde{\Gamma}_h(e))}^2\right)\\
&\lesssim
h^{-3}_{S_T}\| e_h^l-\psi_h\|_{L^2(\omega(e))}^2+h^{-1}_{S_T}\|\nabla( e_h^l-\psi_h)\|_{L^2(\omega(e))}^2\\
&\qquad+\sum_{e\in\partial T}\left(\|\nabla_\Gamma e_h^l\|_{L^2(\bp(\widetilde{\Gamma}_h(e)))}^2+h^{-1}_{S_T}\|\nabla\psi_h\|_{L^2(\omega(e))}^2\right),
\end{split}
\end{equation}
where we used  an estimate
\begin{equation}\label{aux599-2}
\|\nabla_{\widetilde{\Gamma}_h} e_h^l\|_{L^2(\widetilde{\Gamma}_h(e))}\lesssim \|\nablaG e_h^l\|_{L^2(\bp(\widetilde{\Gamma}_h(e)))},
\end{equation}
which holds due to \eqref{e:3.16}, \eqref{grad2} and the fact that \eqref{e:3.9},  \eqref{e:3.10} also hold for the locally extended $\Gamma_h$ with possibly different $O(1)$ constants $c_1$, $c_2$.
Also note that for any lifted function $u^l\in L^2(\omega_h)$
\begin{equation}\label{aux599}
\|u^l\|_{L^2(S)}^2 \lesssim h_{S_T}\|u^l\|_{L^2(\bp(S))}^2.
\end{equation}
Thanks to our assumption (i) there is a Scott-Zhang type interpolant $\psi_h\in V_h$ of $ e_h^l\in H^1(\Omega)$~\cite{heuveline2007h}  such that
\begin{equation}\label{A2_2}
h^{-1}_{S}\| e_h^l-\psi_h\|_{L^2(S)}+\|\nabla\psi_h\|_{L^2(S)}\lesssim \| e_h^l\|_{H^1(\omega(S))}\quad\forall~S\in\Omega_h,
\end{equation}
where $\omega(S)$ is defined as follows: Let $\tilde\omega(S)$ consist of $S$ and of all end-level cubic cells touching $S$, then
$\omega(S)$ is a patch of cells defined as the union of $\tilde\omega(S)$ and of all end-level cubic cells touching $\tilde\omega(S)$.
We assume $\tilde{c}$ in \eqref{e:3.1} to be sufficiently large  and $h$ sufficiently small  that $\omega(S)\subset \cO(\Gamma)$  for all $S\in\Omega_h$.

Applying in \eqref{A2_1} the estimates from \eqref{A2_2}, \eqref{aux599} and the result from Lemma~\ref{L3}  yields
\begin{equation}\label{A2_3}
\begin{split}
\sum_{T\in \F_h}\left[h^{-2}_{S_T}\| e_h-\psi_h\|_{L^2(T)}^2\right. & \left.
+h_{S_T}^{-1}\| e_h-\psi_h\|_{L^2(\partial T)}^2 + s_{S_T}^\ast(\psi_h-e_h,\psi_h-e_h) \right]\\
&\lesssim \sum_{T\in \F_h}\left( h_{S_T}^{-1} \| e_h^l\|_{H^1(\omega(S_T))}^2+\|\nabla_\Gamma e_h^l\|_{L^2(\bp(\omega(S_T)))}^2+ h^{-1}_S\|\nabla
\psi_h\|^2_{L^2(\omega(S_T))}\right)\\
&\lesssim \sum_{T\in \F_h}\left( h_{S_T}^{-1} \| e_h^l\|_{H^1(\omega(S_T))}^2+\|\nabla_\Gamma e_h^l\|_{L^2(\bp(\omega(S_T)))}^2\right)\\ &\lesssim  \sum_{S\in \omega_h} \| e_h^l\|_{H^1(\bp(\omega(S_T)))}^2.
\end{split}
\end{equation}
In the last inequality we also used the fact that for the graded octree mesh $\mbox{diam}(\omega(S_T))\simeq h_{S_T}$.
Due to assumption (i) any cell $S_T$ may belong to a uniformly bounded number of patches. Thanks to this and assumption (ii) any $\bx\in\Gamma$ may belong to the projections of patches which  total number is also  uniformly bounded. This establishes the bound
\begin{equation}\label{A2_4}
\sum_{T\in \F_h} \| e_h^l\|_{H^1(\bp(\omega(S_T)))}^2\lesssim \| e_h^l\|_{H^1(\Gamma)}^2.
\end{equation}
Using   \eqref{A2_1}--\eqref{A2_4} proves the lemma. 
\end{proof}

Combining \eqref{s4_e3}, \eqref{eq:l1_new} and \eqref{aux25}, \eqref{A2} gives the following \textit{a posteriori} error estimate
\begin{center}
\framebox{
\parbox[c]{0.9\textwidth}{
\begin{multline}
\label{apost}
\tnorm{e_h}\lesssim
\left(\sum_{T\in \F_h} \|f^e\mu_h-f_h\|^2_{L^2(T)}+ 
\|\mathbf{A}_h-\bP_h\|^2_{L^\infty(T)}\|\nabla_{\Gamma_h} u_h\|^2_{L^2(T)}
\right)^{\frac12}\\
+
\left(\sum_{T\in \F_h}\left[\eta_R(T)^2+\eta_F(S_T)^2 + s_{S_T}^\ast(u_h,u_h)\right]\right)^{\frac12}.  
\end{multline}  
}
}
\end{center}

Assume that local grid refinement leads to better local surface reconstruction, i.e. \eqref{e:3.9}
and \eqref{e:3.10} can be formulated locally, then it holds $\|f^e\mu_h-f_h\|_{L^2(T)}+\|\mathbf{A}_h-\bP_h\|_{L^\infty(T)}=O(h^{k+1})$. In this case, the first term on the right-hand side of \eqref{apost} is of higher order if $k\ge1$ for $Q_1$ and $k\ge2$ for $k=2$.

\section{Numerical examples}\label{s:num}
This section presents a numerical study of an adaptive version of the stabilized TraceFEM \eqref{FEM}, which relies on the novel indicator \eqref{apost}. First, we provide details of the adaptive algorithm, including the surface approximation, in Section~\ref{sec:adaptiveTrace}. Next, we confirm a posteriori estimates for the families $Q_1$ and $Q_2$. Moreover, we address the efficiency of the indicator using a manufactured solution. We test both gradient jump and normal gradient volume stabilizations. However, we omit the bulk jump indicator $\eta_F(S_T)$ \eqref{indicator_F} in the proposed indicator \eqref{numerical_indicator} if the TraceFEM scheme \eqref{linear_system} is stabilized by including $s_h^{JF}$ or $s_h^{JF2}$ forms; see Remark~\ref{rem:omit_bulkjump}.

\subsection{A low-regularity test case}
This section discusses the model problem \eqref{LBw}, the solution of which is not regular enough to provide optimal rates of convergence if uniform refinement is employed. 
We consider the unit sphere $\Gamma$ and a family of solutions
$u=u_\lambda \in H^{1+\lambda}(\Gamma)$, $0\leq\lambda\leq 1$,  such that
    \begin{equation}
  -\Delta_{\Gamma} u + u=f,\label{eq:lambdaLB}
\end{equation}
with the forcing $f=f_\lambda\in H^{\lambda-1}(\Gamma)$. Consequently, by choosing different values of $\lambda$, we may obtain exact solutions of desired regularity.  
An example  \cite{Chernyshenko2015} of such a family is given in spherical polar coordinates $(\phi, \theta)$, $\theta\in [0,\pi]$, $\phi\in(-\pi,\pi]$, by
\begin{equation}\label{polar}
    u= \sin^\lambda\theta\sin\phi,\qquad  f=(1+\lambda^2+\lambda)\sin^\lambda\theta\sin\phi+(1-\lambda^2)\sin^{\lambda-2}\theta\sin\phi.
\end{equation}

  Clearly,  $u$ and $f$ have singularities at the north, $\theta=0$ or $(x,y,z)=(0,0,1)$, and the south, $\theta=\pi$ or $(x,y,z)=(0,0,-1)$, poles (see Figure~\ref{fig:snapshots}) while being harmonic in the azimuthal direction $\phi$  for each fixed $\theta\neq 0,\pi$.
 
Before the iterative adaptive procedure starts, one constructs a sufficiently fine mesh of $\Omega=[-2,2]^3$ so the initial surface approximation $\Gamma_h$ is well-defined. To this end, the distance function $d(x,y,z)=x^2+y^2+z^2-1$ is chosen for the level-set description of the unit sphere $\Gamma$. The edges of the cube $\Omega$ are  divided in eight equal segments of length $h=0.5$, see Figure~\ref{fig:snapshots}, cycle$=0$. These cells constitute the initial mesh $\mathcal{T}_h$.

\subsection{Adaptive stabilized TraceFEM}
\label{sec:adaptiveTrace}
In this section we present the adaptive algorithm tested in the numerical experiments.
The adaptive procedure is a sequence of cycles each consisting of \VY{the} three steps below.

\textit{Step 1} (\texttt{APPROXIMATE GEOMETRY}). To guarantee continuity of the surface approximation,
{we first resolve all hanging nodes in $\mathcal{T}_h$ by adding a sufficient number of linear constraints}. The interpolant $\phi^k_h$ of order $k$ of the  distance function $d$  on the mesh $\mathcal{T}_h$ identifies the active domain $\omega_h$ consisting of intersected cells $\mathcal{T}_h^\Gamma$. Geometrical information such as the normal vector $\bn_h$ and the surface quadratures representing $\Gamma_h$ is derived from the discrete distance function $\phi_h^k$.

\textit{Step 2} (\texttt{SOLVE}).   The finite element space $V_h^{k}$ consists of continuous piece-wise $Q_1$ or $Q_2$ functions defined on $\mathcal{T}_h^\Gamma$. We solve the following linear system: find $u_h\in V_h^{k}$ such that
  \begin{align}\label{linear_system}
 \int_{\Gamma_h}	 \nabla_{\Gamma_h} u_h\cdot\nabla_{\Gamma_h}v_h+ \int_{\Gamma_h}u_h v_h+{s_h( u_h, v_h)} = \left(f^e, v_h\right)_{\Gamma_h} \,,\qquad\, \forall v_h \in V_h^{k}
	\end{align}
 where the term $s_h$ represents one of stabilizations from Section~\ref{sec:tracefem_stab}. 
 
\textit{Step 3} (\texttt{ESTIMATE\&MARK\&REFINE}). Fix a $0<\theta<1$. Using the discrete solution $u_h$, we compute the indicator $\eta(S_T)$, 
\begin{align}\label{numerical_indicator}
\eta^2(S_T)=\|\llbracket \nabla u_h \rrbracket\|^2_{L^2(\pa S_T\cap \omega_h)}+h^2_{S_T}\|f^e+\Delta_{\Gamma_h}  u_h-u_h\|^2_{L^2(T)} +  s_{S_T}^\ast(u_h,u_h)
\end{align}  on each intersected cell $S_T\in \mathcal{T}_h^\Gamma$. Next we determine the smallest by cardinality set $\mathcal{T}_h^\theta\subset \mathcal{T}_h^\Gamma$ such that
\begin{align}\label{fraction}
    \sum_{S_T \in \mathcal{T}_h^\theta} \eta^2(S_T) > \theta \sum_{S_T \in \mathcal{T}_h^\Gamma} \eta^2(S_T) 
\end{align}
and, finally, refine the cells in $\mathcal{T}_h^\theta$ uniformly.

 This completes the first cycle. At the beginning of the next cycle the new mesh $\mathcal{T}_h$, refined near $\Gamma$, of the domain $\Omega$ is available and we proceed to Step 1.

 \begin{figure}[htb]
\centering
\captionsetup[subfloat]{labelformat=empty}
  \subfloat[cycle=0]{%
    \includegraphics[width=.24\textwidth]{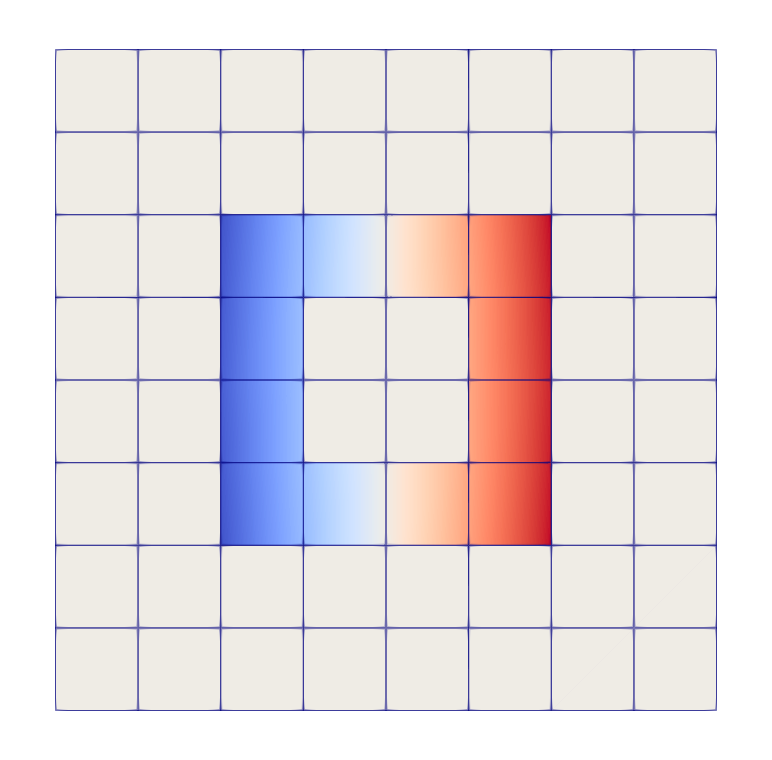}}\hfill
  \subfloat[cycle=1]{%
    \includegraphics[width=.24\textwidth]{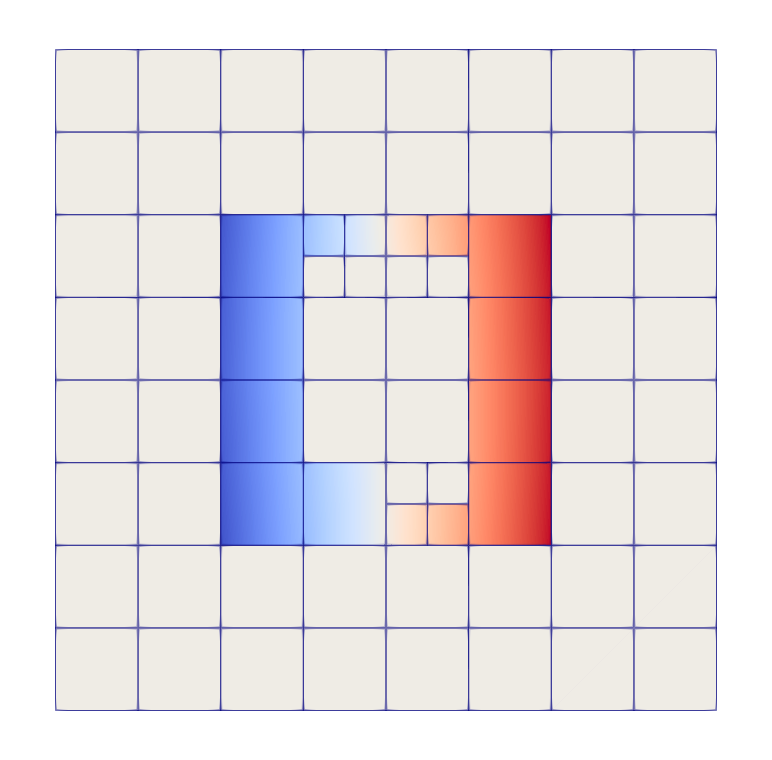}}\hfill
  \subfloat[cycle=2]{%
    \includegraphics[width=.24\textwidth]{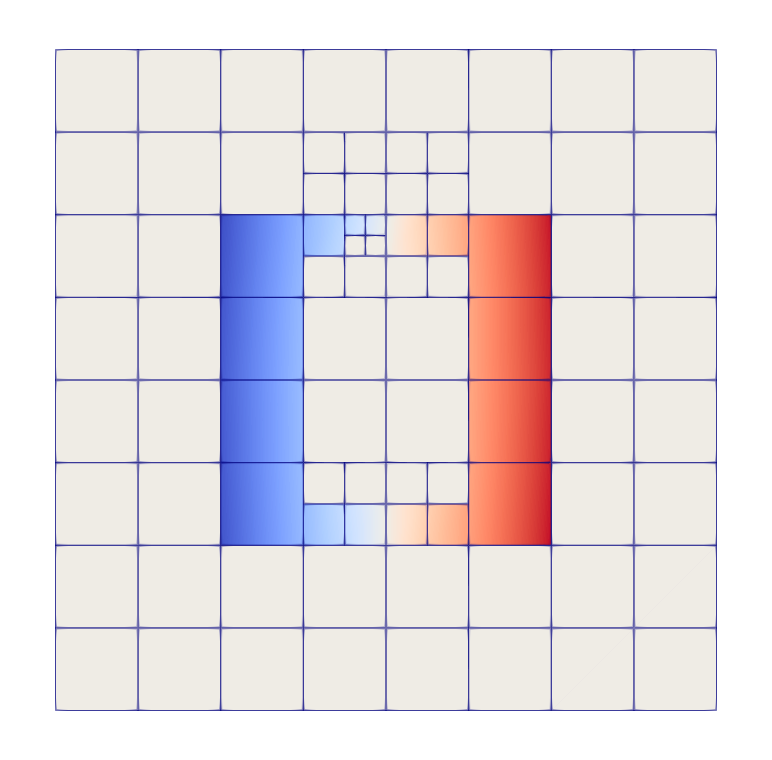}}\hfill
  \subfloat[cycle=11]{%
    \includegraphics[width=.24\textwidth]{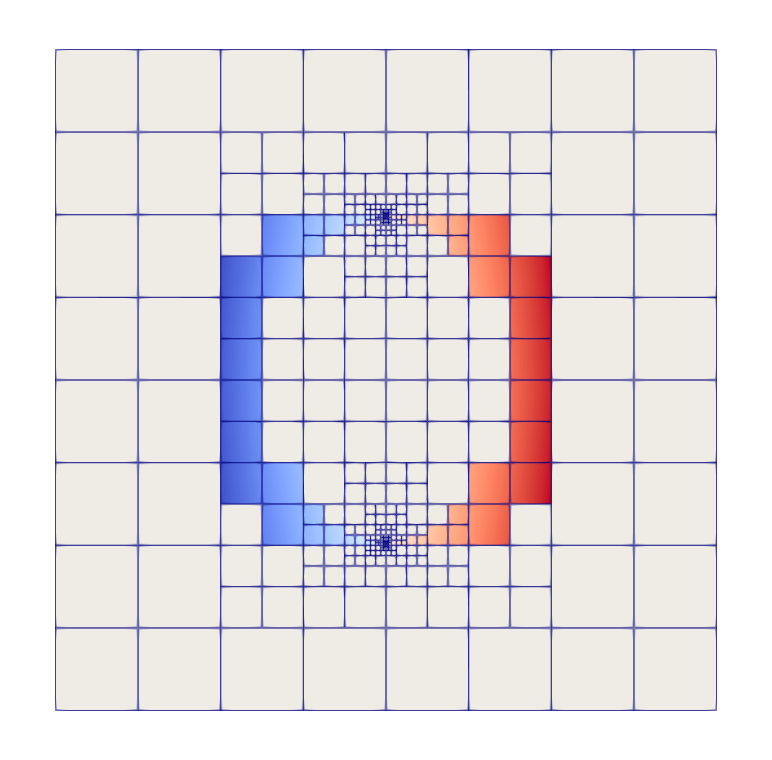}}\\
      \subfloat[north pole, cycle=12]{%
    \includegraphics[width=.24\textwidth]{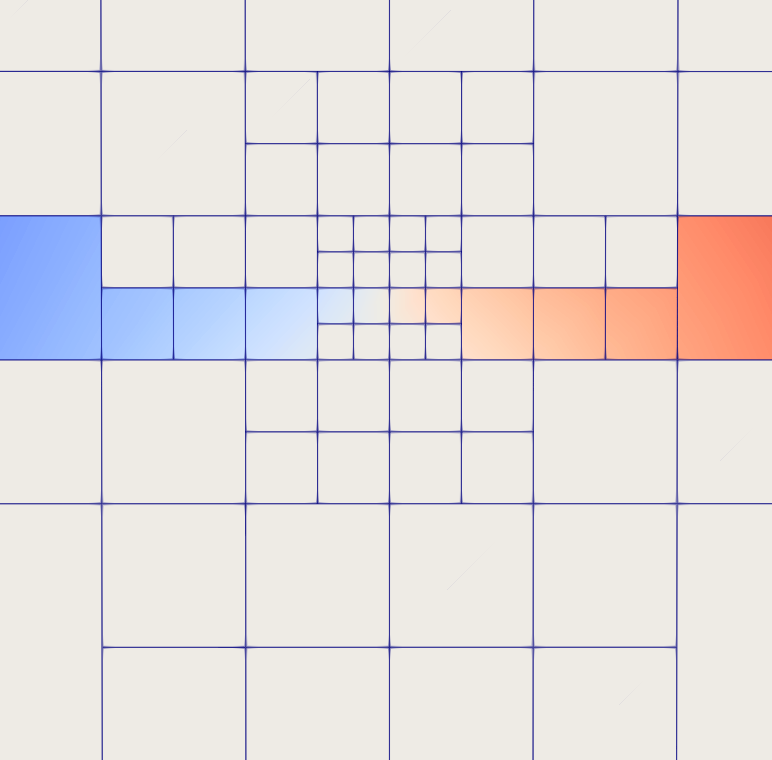}}\hfill
  \subfloat[north pole, cycle=13]{%
    \includegraphics[width=.24\textwidth]{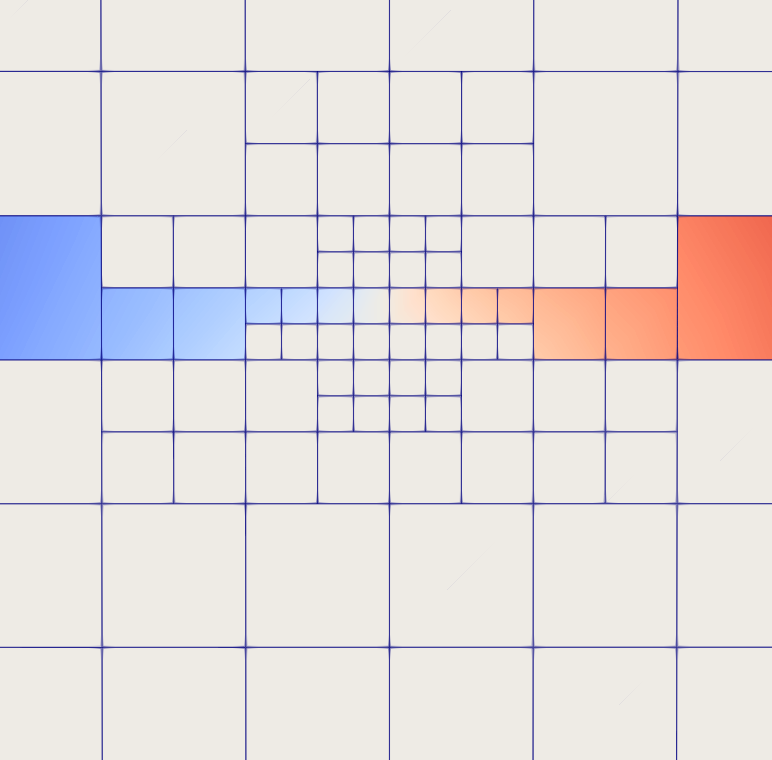}}\hfill
  \subfloat[north pole, cycle=14]{%
    \includegraphics[width=.24\textwidth]{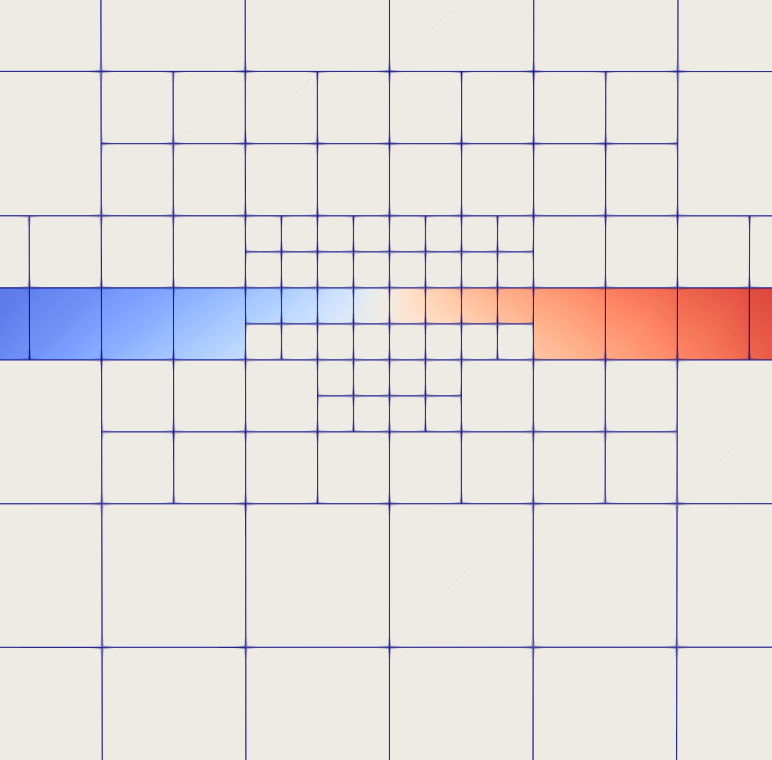}}\hfill
  \subfloat[north pole, cycle=15]{%
    \includegraphics[width=.24\textwidth]{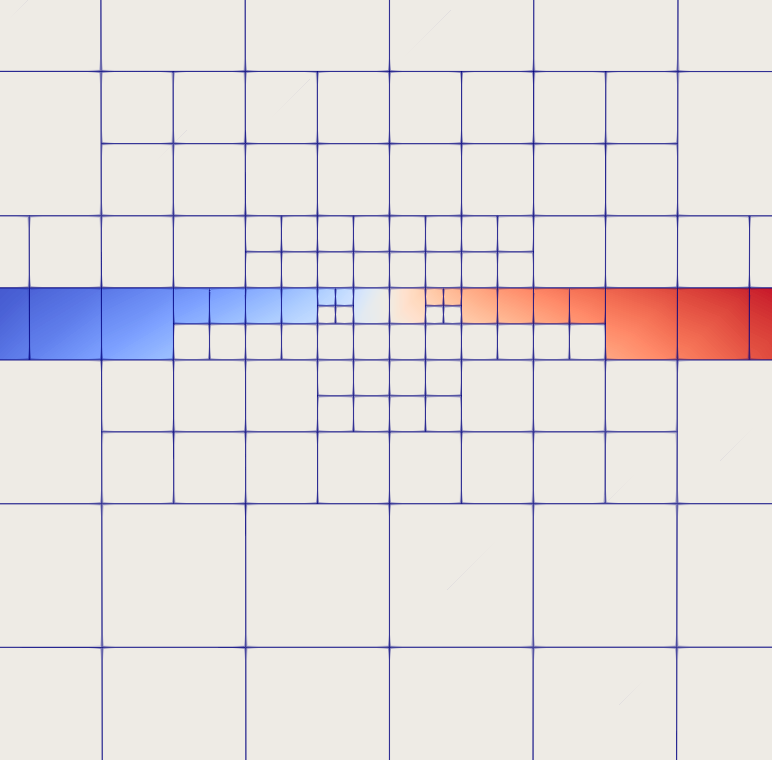}}\\
  \caption{Snapshots of the mesh crosscuts at different cycles of the adaptive procedure from Section~\ref{sec:adaptiveTrace}. The surface $\Gamma_h$ is not shown. Active elements $\mathcal{T}_h^\Gamma$ and the corresponding domain $\omega_h$ are colored by the values of the solution \eqref{polar} with $\lambda=0.4$. Vertical direction corresponds to OZ axis. Top: the whole domain $[-2,2]^3$, with many cells remain coarse throughout the procedure. Bottom: closeup view of the north pole $(0,0,1)$ of the unit sphere where the gradient of the solution \eqref{polar} blows up.}\label{fig:snapshots}
\end{figure}

\subsection{Unfitted quadratures and other implementation details}
The adaptive stabilized TraceFEM scheme of Section~\ref{sec:adaptiveTrace} was implemented in the Finite Element library \textbf{deal.II} \cite{dealII95,dealii2019design}. Since the method is not standard, we start with discussing some implementation details. 

\begin{itemize}
    \item The degrees of freedom of the level-set function exist across the entire mesh domain, whereas the degrees of freedom of the solution are confined to the colored, active domain of intersected cells. In principle, the discrete level-set approximation could have a different order or even an independent mesh from that of the solution. However, for the sake of convenience, we utilized the same triangulation for both the solution and the level-set in our implementation.
    
    \item Given that the mesh contains hanging nodes, ensuring the continuity of the FE spaces defined on it is necessary for a $H^1$-conforming method. This continuity requirement extends to both the discrete level-set and the discrete solution. To achieve this, we express the continuity condition for each hanging node as a linear combination involving local degrees of freedom, which is subsequently incorporated into the linear system. We apply a similar post-processing technique to the discrete level-set function, defined by a point-wise Lagrange interpolant, to eliminate any gaps in the discrete surface $\Gamma_h$.
    
    \item The implementation of \eqref{linear_system} requires the integration of polynomial functions over the intersections of the implicit surface $\Gamma_h$ with end cells from $\T_h^\Gamma$. This procedure is non-standard, and our implementation relies on the dimension-reduction approach detailed in \cite{saye2015high}. Notably, this algorithm is purpose-built for quadrilaterals and can accommodate higher-order approximations of $\Gamma_h$.

    \item Implementation of  stabilization forms $s_h^{NV}$ and $s_h^{JF}$ requires standard, e.g. Gauss--Lobatto, quadratures on a three-dimensional cube $S_T$ and on a two-dimensional square $F$, correspondingly. 
    
    \item Computation of the indicator \eqref{total_indicator} involves the same numerical integration procedures as used for \eqref{linear_system}.
       
    \item Although the forcing term $f^e$ is not an $L^2(\Gamma_h)$ function, the integral on the right-hand side of \eqref{linear_system} remains well-defined, provided that none of the surface quadrature nodes intersect the north or south poles when projected onto $\Gamma$.
    
    \item  In the course of adaptive refinement some of  inactive cells and some active cells not from $\T_h^\theta$ are refined so that the mesh remains graded.
\end{itemize}

\subsection{Uniform refinement}
 The first example serves to motivate the adaptivity and to test our implementation of TraceFEM for $V_h^1$ and $V_h^2$ ambient spaces. We choose  the exact solutions \eqref{polar}, $u_\lambda\in H^{1+\lambda}(\Gamma)$  with $\lambda=1.0$, $\lambda=0.7$ and $\lambda=0.4$ and solve the discrete problems \eqref{FEM} with $k=1$, $s_h( u, v)=s_h^{NV}( u, v)$,
 and stabilization parameter $\rho_S=10h_S^{-1}$.  The active domain $\omega_h$ is refined uniformly and the obtained solutions $u_h\in V_h^1$ are compared with the normal extension $u^e$ of the exact solution $u\in H^{1+\lambda}(\Gamma)$. We evaluate the  following surface error norms,
 \begin{align}\label{error_norms}
     \|u_h-u^e\|_{L^2(\Gamma_h)}\,,\qquad \|\nabla_{\Gamma_h}{u_h}-(\nablaG{}u)^e\|_{L^2(\Gamma_h)}\,
 \end{align} and the results are presented in Figure~\ref{fig:uniform}. Optimal rates are observed for $\lambda=1.0$, which corresponds to $u\in H^2(\Gamma)$, but, as $\lambda$ decreases,  the  rates  deteriorate  in accordance with the regularity, $u_\lambda\in H^{1+\lambda}$, of the problem.  Asymptotically, the  rate $h^{\lambda}$ is attained for the energy norm as it would be expected for fitted FEMs.
 
 We conducted the same uniform refinement test using the gradient-jump face stabilization $s_h^{JF}$, and the results closely resemble those shown in Figure~\ref{fig:uniform}. Therefore, we have opted not to include an additional plot.
Next, we repeated the test for the $Q_2$ family with $k=2$ in $V_h^k$, employing the stabilizations $s_h^{NV}$ and $s_h^{JF2}$. \VY{When $\lambda=1$, the convergence rates are optimal and correspondent to a finite element space of second degree. However, in cases of low regularity where $\lambda<1$, the rate of convergence  attains $h^{\lambda}$ only in the energy norm.}
 
\def\varL{lambda=1/convergence_lambda=1.000000_STAB=0_IND=0.dat}
\def\varLL{lambda=7e-1/convergence_lambda=0.700000_STAB=0_IND=0.dat}
\def\varLLL{lambda=4e-1/convergence_lambda=0.400000_STAB=0_IND=0.dat}
 \begin{figure}
		\begin{subfigure}[b]{0.3\textwidth}            
			\begin{tikzpicture}[scale=0.8]
				\def\vara{5}
				\def\varb{10}
				\begin{loglogaxis}[ xlabel={dofs},xmax=1e6,xmin=1e2,  x tick label style={/pgf/number format/.cd,%
						scaled x ticks = false,
						set decimal separator={.},
						fixed}, ylabel={Error}, ymin=1E-4	, ymax=2
					,legend pos=outer north east]   
					
					\addplot+[black,mark=o, solid,mark size=3pt,mark options={solid},line width=0.5pt] table[x=Degrees_of, y=L2] {\varL}; 
					


     \addplot+[black,mark=diamond, solid,mark size=3pt,mark options={solid},line width=0.5pt] table[x=Degrees_of, y=L2] {\varLL}; 
					


     \addplot+[black,mark=+, solid,mark size=3pt,mark options={solid},line width=0.5pt] table[x=Degrees_of, y=L2] {\varLLL}; 
					

     
					\addplot[dashed,line width=1pt] coordinates { 
						(1,\vara) (1e1,\vara*0.1) (1e2,\vara*0.01) (1e3,\vara*0.001) (1e4,\vara*0.0001) (1e5,\vara*0.00001) (1e6,\vara*0.000001)
					};
					
					\def\sqrtten{0.316227766017}	
					\addplot[dotted,line width=1pt] coordinates { 
						(1,\varb) (1e1,\varb*\sqrtten) (1e2,\varb*0.1) (1e3,\varb*0.1*\sqrtten) (1e4,\varb*0.01) (1e5,\varb*0.01*\sqrtten) (1e6,\varb*0.001)
					};
				\end{loglogaxis}
			\end{tikzpicture}	
		\end{subfigure}%
  \hskip 2cm
		\begin{subfigure}[b]{0.3\textwidth}            
			\begin{tikzpicture}[scale=0.8]
				\def\vara{5}
				\def\varb{10}
				\begin{loglogaxis}[ xlabel={dofs},xmax=1e6,xmin=1e2,  x tick label style={/pgf/number format/.cd,%
						scaled x ticks = false,
						set decimal separator={.},
						fixed}, ymin=1E-2, ymax=2
					,legend pos=outer north east]   
					
					
					\addplot+[black,mark=o, solid,mark size=3pt,mark options={solid},line width=0.5pt] table[x=Degrees_of, y=H1] {\varL};


					
					\addplot+[black,mark=diamond, solid,mark size=3pt,mark options={solid},line width=0.5pt] table[x=Degrees_of, y=H1] {\varLL};      

					
					\addplot+[black,mark=+, solid,mark size=3pt,mark options={solid},line width=0.5pt] table[x=Degrees_of, y=H1] {\varLLL};

					\addplot[dashed,line width=1pt] coordinates { 
						(1,\vara) (1e1,\vara*0.1) (1e2,\vara*0.01) (1e3,\vara*0.001) (1e4,\vara*0.0001) (1e5,\vara*0.00001) (1e6,\vara*0.000001)
					};
					
					\def\sqrtten{0.316227766017}	
					\addplot[dotted,line width=1pt] coordinates { 
						(1,\varb) (1e1,\varb*\sqrtten) (1e2,\varb*0.1) (1e3,\varb*0.1*\sqrtten) (1e4,\varb*0.01) (1e5,\varb*0.01*\sqrtten) (1e6,\varb*0.001)
					};
					\legend{
						$\lambda=1.0$,
      						$\lambda=0.7$,
						$\lambda=0.4$,
						$\textrm{dofs}^{-1}$,
						$\textrm{dofs}^{-1/2}$
					}
				\end{loglogaxis}
			\end{tikzpicture}	
		\end{subfigure}%

		\caption{Uniform mesh refinement for different values of $\lambda$ using the scheme \eqref{FEM} which is based on the $Q_1$ TraceFEM and is stabilized by \eqref{s_nv}. Left: $\|u_h-u^e\|_{L^2(\Gamma_h)}$ error. Right: $\|\nabla_{\Gamma_h}u_h-(\nablaG{}u)^e\|_{L^2(\Gamma_h)}$ error. The exact solution $u_\lambda$ is of low regularity, $u\in H^{1+\lambda}(\Gamma)$ only.  The expected reduction of the convergence rates to $h^{\lambda}$, for the $H^1$-seminorm is observed for $\lambda<1$. The $L^2$-norm error appears to be less sensitive to $\lambda$ at least for the tested refinement levels. 
  }
     \label{fig:uniform}
	\end{figure}
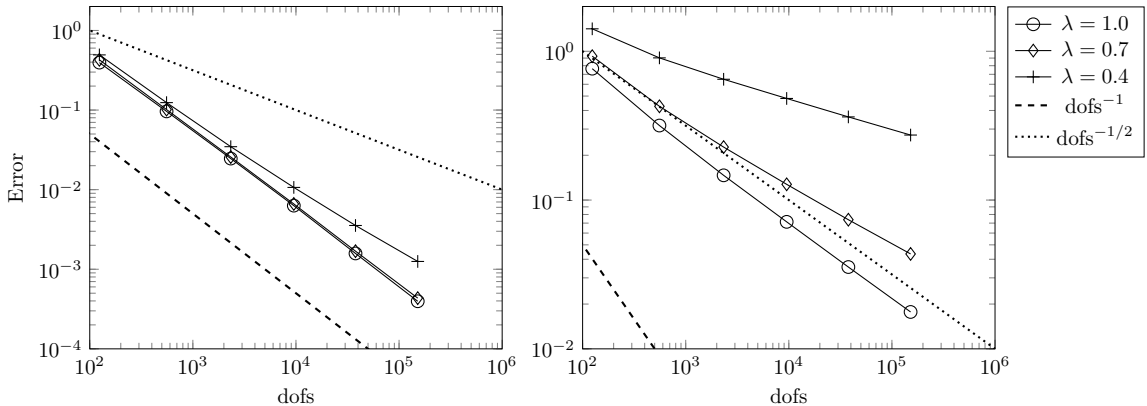

\subsection{Efficiency indexes}
\label{sec:efficiency}
In the numerical experiments we consider different notions of the efficiency. As usual, local efficiency indexes are computed for active cells $S_T \in \mathcal{T}_h^\Gamma$. These indices gauge how closely the actual error, $e_h=\nabla_{\Gamma_h}{u_h}-\nabla{}u^e$, is to the error indicator $\eta$ on the cell. Accumulated over all cells, a reliable indicator  estimates the error from above. The indicator is said to be efficient if the ratio of the indicator and the error, i.e. the efficiency index,  is bounded from above independent of the discretization level.

 We will consider three efficiency indexes which differ in the patch of neighboring cells contributing to the local error $e_h$ for the cell $S_T$. To compute the indexes, one maximizes the following ratios over all cuts $T=S_T\cap\Gamma_h$,
 \begin{align}\label{def::efficiency}
     I_1=\max\limits_{T}\,\frac{\eta(S_T)}{
							\|\xi_h\|_{\Gamma_h\cap\omega_{S_T}}}\,,\qquad
						I_2=\max\limits_{T}\,\frac{\eta(S_T)}{
							\|\xi_h\|_{\Gamma_h\cap\omega_T
						}}\,,\qquad
						I_3=\max\limits_{T}\,\frac{\eta_R(T)}{
							\|\xi_h\|_{\Gamma_h\cap S_T}	}
 \end{align}
 Here $\xi_h=\nabla_{\Gamma_h}{u_h}-\nabla{}u^e$ is the energy error, $\omega_{S_T}$ is the patch of all active cells from $\omega_h$ which share at least a vertex with the cell $S_T$; $\omega_T$ is the patch of all active cells from $\omega_h$ which share with the cell $S_T$ a face intersected by $\Gamma_h$. 
 Clearly, the efficiency index $I_3$ accumulates the error over a single cell $S_T$ only and it is the sharpest way to characterize the indicator.
 The notion of efficiency given by $I_3$ is too stringent, as it is known that the corresponding index blows up numerically even for a fitted FEM. At the same time, the theory of a fitted adaptive FEM guarantees that the indicator  is efficient if the error is accumulated over a patch of neighbors. This fact suggests that the indexes \VY{$I_1$  and $I_2$} are reasonable extensions of a similar notion to the unfitted finite element. The distinction between \VY{$I_1$ and $I_2$} lies in their dependence on the bulk mesh and the surface: in the former, the patch is based on the connectivity of the intersected cuts $T$, while in the latter, it relies on the connectivity of the bulk cells $S_T$.

 \begin{remark}\rm
 Note that the error part in \eqref{def::efficiency} does not include  the stabilization $s_h$ because we are interested in the surface error for a solution to a surface PDE. This is in contrast to the indicator $\eta(S_T)$ and to the natural discrete norm of \eqref{FEM} which include the stabilization $s_h$. One may question if adding the stabilization $s_h(u_h-u^e,u_h-u^e)$ to the denominator of indicators \eqref{def::efficiency} can lead to a  notion of efficiency which is more suitable to TraceFEM. As we found in our numerical experiments, such alternation does not  change  main conclusions drawn from the numerical experiments. For these reasons, we present the numerical results using the efficiency indexes as defined in \eqref{def::efficiency}.
 \end{remark}

\subsection{Efficiency and Reliability for the $Q_1$ elements}
\label{sec:Q1}

In this experiment, we assess the reliability and the efficiency of the indicator \eqref{numerical_indicator} using the $Q_1$ family of polynomials \eqref{q1_family}. Therefore, we choose a low-regularity solution \eqref{polar}, $u\in H^{1+\lambda}(\Gamma)$ with  $\lambda=0.4$, of the Laplace--Beltrami problem \eqref{LBw} posed on the unit sphere.  We run the adaptive TraceFEM  stabilized by $s_h=s_h^{JF}$ with $\sigma_F=10$ and by $s_h=s_h^{NV}$ with $\rho_S=10h_S^{-1}$ and evaluate surface errors \eqref{error_norms}. 

The numerical results, as presented in the top panel of Figure~\ref{fig:main_Q1}, confirm the a posteriori analysis conducted in Section~\ref{s:analysis}. Optimal rates are observed with both stabilizations, $s_h^{NV}$ and $s_h^{JF}$, as shown in Figure~\ref{fig:main_Q1}, \Green{and fewer degrees of freedom appear to be needed while using $s_h^{NV}$ to achieve comparable errors}.
Furthermore, in the plots of the bottom panel in Figure~\ref{fig:main_Q1}, we evaluate the efficiency indexes \eqref{def::efficiency} corresponding to several notions of efficiency discussed in Section~\ref{sec:efficiency}. The indexes $I_1$ and $I_2$ suggest the efficiency of the indicators for $Q_1$ adaptive TraceFEM.

	\def\varfnv{lambda=4e-1/convergence_lambda=0.400000_STAB=0_IND=1.dat}
	\def\varfjf{lambda=4e-1/convergence_lambda=0.400000_STAB=1_IND=1.dat}
	\begin{figure}[ht]
		\begin{subfigure}[b]{1\textwidth}            
			\begin{tikzpicture}[scale=0.7]
				\def\vara{450}
				\def\varb{30}
				
				\begin{loglogaxis}[ xlabel={dofs},xmax=1e6,xmin=1e2,  x tick label style={/pgf/number format/.cd,%
						scaled x ticks = false,
						set decimal separator={.},
						fixed}, ylabel={Error}, ymin=1E-4	, ymax=10
					,legend pos=outer north east]   
					
					\addplot+[blue,mark=+, solid,mark size=2pt,mark options={solid},line width=1.5pt] table[x=Degrees_of, y=L2] {\varfnv}; 
					
					\addplot+[green,mark=+, solid,mark size=1.5pt,mark options={solid},line width=1.5pt] table[x=Degrees_of, y=H1] {\varfnv};        
					
					\addplot+[red,mark=+, solid,mark size=1.5pt,mark options={solid},line width=1.5pt] table[x=Degrees_of, y=Estimator] {\varfnv};  
					
					
					\addplot[dashed,line width=1pt] coordinates { 
						(1,\vara) (1e1,\vara*0.1) (1e2,\vara*0.01) (1e3,\vara*0.001) (1e4,\vara*0.0001) (1e5,\vara*0.00001) (1e6,\vara*0.000001)
					};
					
					\def\sqrtten{0.316227766017}	
					\addplot[dotted,line width=1pt] coordinates { 
						(1,\varb) (1e1,\varb*\sqrtten) (1e2,\varb*0.1) (1e3,\varb*0.1*\sqrtten) (1e4,\varb*0.01) (1e5,\varb*0.01*\sqrtten) (1e6,\varb*0.001)
					};
				\end{loglogaxis}
			\end{tikzpicture}\begin{tikzpicture}[scale=0.7]
				\def\vara{2000}
				\def\varb{50}
				
				\begin{loglogaxis}[ xlabel={dofs},xmax=1e6,xmin=1e2,  x tick label style={/pgf/number format/.cd,%
						scaled x ticks = false,
						set decimal separator={.},
						fixed},  ymin=1E-4	, ymax=10
					,legend pos=outer north east]   
					
					\addplot+[blue,mark=+, solid,mark size=2pt,mark options={solid},line width=1.5pt] table[x=Degrees_of, y=L2] {\varfjf}; 
					
					\addplot+[green,mark=+, solid,mark size=1.5pt,mark options={solid},line width=1.5pt] table[x=Degrees_of, y=H1] {\varfjf};        
					
					\addplot+[red,mark=+, solid,mark size=1.5pt,mark options={solid},line width=1.5pt] table[x=Degrees_of, y=Estimator] {\varfjf};  
					
					
					\addplot[dashed,line width=1pt] coordinates { 
						(1,\vara) (1e1,\vara*0.1) (1e2,\vara*0.01) (1e3,\vara*0.001) (1e4,\vara*0.0001) (1e5,\vara*0.00001) (1e6,\vara*0.000001)
					};
					
					\def\sqrtten{0.316227766017}	
					\addplot[dotted,line width=1pt] coordinates { 
						(1,\varb) (1e1,\varb*\sqrtten) (1e2,\varb*0.1) (1e3,\varb*0.1*\sqrtten) (1e4,\varb*0.01) (1e5,\varb*0.01*\sqrtten) (1e6,\varb*0.001)
					};
					\legend{
						$\|e_h\|_{L^2(\Gamma_h)}$,
						$\|\nablaG e_h\|_{L^2(\Gamma_h)}$,
						$(\sum_T\eta^2(S_T))^{1/2}$,
						$\textrm{dofs}^{-1}$,
						$\textrm{dofs}^{-1/2}$
					}
				\end{loglogaxis}
			\end{tikzpicture}
		\end{subfigure}%

		\begin{subfigure}[b]{1.0\textwidth}            
			\begin{tikzpicture}[scale=0.7]
				\begin{loglogaxis}[ xlabel={dofs},xmax=1e6,xmin=1e2,  x tick label style={/pgf/number format/.cd,%
						scaled x ticks = false,
						set decimal separator={.},
						fixed}, ylabel={Efficiency}, 
					,legend pos=outer north east]   
					
					\addplot+[black,mark=none, solid,mark size=2pt,mark options={solid},line width=1.5pt] table[x=Degrees_of, y=Ultimate] {\varfnv}; 
					
					\addplot+[black,mark=none, dashed,mark size=2pt,mark options={solid},line width=1.5pt] table[x=Degrees_of, y=Surface] {\varfnv};

					\addplot+[black,mark=none, dotted,mark size=2pt,mark options={solid},line width=1.5pt] table[x=Degrees_of, y=Residual] {\varfnv}; 
				\end{loglogaxis}
			\end{tikzpicture}
			\begin{tikzpicture}[scale=0.7]
				\begin{loglogaxis}[ xlabel={dofs},xmax=1e6,xmin=1e2,  x tick label style={/pgf/number format/.cd,%
						scaled x ticks = false,
						set decimal separator={.},
						fixed},  
					,legend pos=outer north east]   
					
					\addplot+[black,mark=none, solid,mark size=2pt,mark options={solid},line width=1.5pt] table[x=Degrees_of, y=Ultimate] {\varfjf}; 
					
					\addplot+[black,mark=none, dashed,mark size=2pt,mark options={solid},line width=1.5pt] table[x=Degrees_of, y=Surface] {\varfjf};

					\addplot+[black,mark=none, dotted,mark size=2pt,mark options={solid},line width=1.5pt] table[x=Degrees_of, y=Residual] {\varfjf}; 
					\legend{$I_1$, $I_2$, $I_3$}
				\end{loglogaxis}
			\end{tikzpicture}	
		\end{subfigure}
		
		\caption{Adaptive refinement with $\theta=0.5$ using the indicator \eqref{total_indicator} for the $Q_1$ TraceFEM.  Left: $s_h^{NV}$ stabilization with $\rho_S=10h^{-1}_S$. Right: $s_h^{JF}$ stabilization with $\sigma_F=10$.  Top: surface errors \eqref{error_norms} for $e_h=u_h-u^e$ and the global estimator $(\sum_T\eta^2(S_T))^{1/2}$.  Bottom: efficiency indexes \eqref{def::efficiency} for different patches   of neighbors.
			The exact solution $u\in H^{1+\lambda}(\Gamma)$ with $\lambda=0.4$ is  given by \eqref{polar} on the unit sphere $\Gamma$.  We observe that the indicator \eqref{total_indicator} is reliable and efficient for $Q_1$ TraceFEM with both stabilizations.
   }
   \label{fig:main_Q1}
	\end{figure}
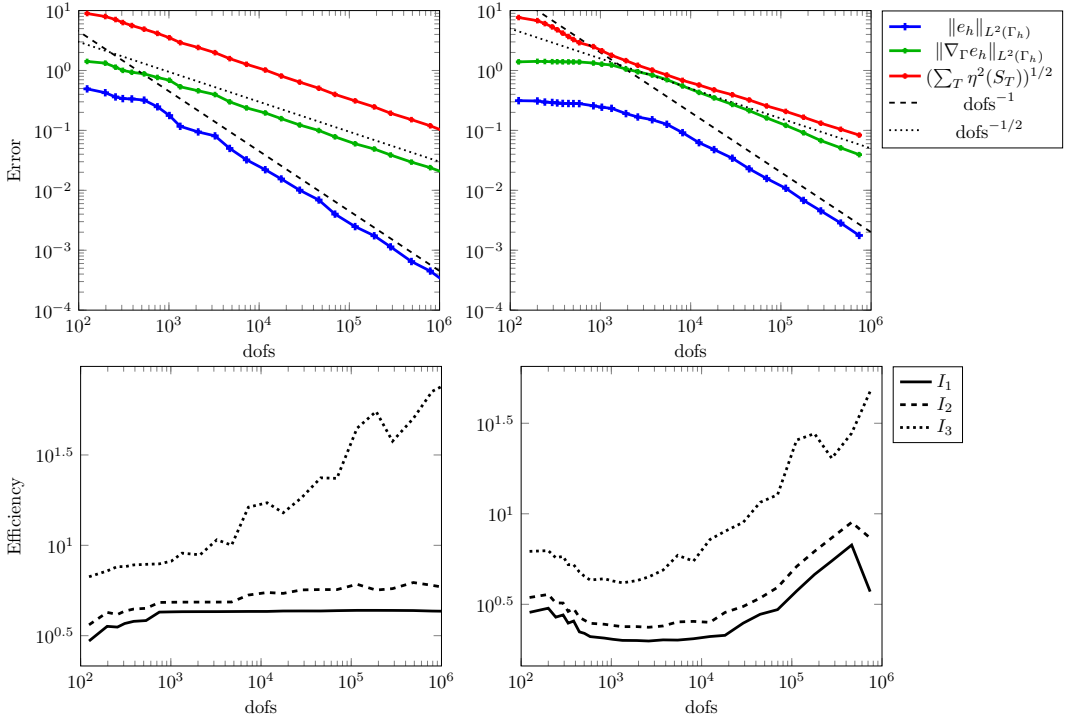

\subsection{Efficiency and Reliability for the $Q_2$ elements}
\label{sec:Q2}

We proceeded to repeat the experiment for the $Q_2$ TraceFEM, employing the discrete space $V_h^2$ for both the solution $u_h$ and the surface approximation $\Gamma_h$, following the same adaptive algorithm outlined in Section~\ref{sec:adaptiveTrace}. In this case, for the gradient-jump face stabilization, the $s_h^{JF}$ form was replaced by the $s_h^{JF2}$ form with $\sigma_F=10$.
As shown in the top panel of Figure~\ref{fig:main_Q2}, the $Q_2$ TraceFEM with gradient-jump face stabilization exhibits optimal convergence rates \Green{in the $L_2$ and $H_1$ norms}, while the $Q_2$ TraceFEM with normal-gradient volume stabilization \Green{shows almost optimal rates in the $H^1$ norm (which is the goal of the suggested indicator \eqref{total_indicator}) and suboptimal rates in the $L_2$ norm. Nevertheless, similar to the $Q_1$ case, the normal-gradient volume stabilization attains considerably smaller errors in both norms for the same number of unknowns}. Unlike the $Q_1$ scenario, the efficiency indexes in the $Q_2$ case exhibit linear growth with the number of degrees of freedom, as depicted in the bottom panel of Figure~\ref{fig:main_Q2}.

	\def\varfnv{Q2/no_s_h_eff_FE=2_convergence_lambda=0.400000_STAB=0_IND=1_stab_param=10.000000.dat}
\def\varfjf{Q2/no_s_h_eff_FE=2_convergence_lambda=0.400000_STAB=4_IND=1_stab_param=10.000000.dat}
	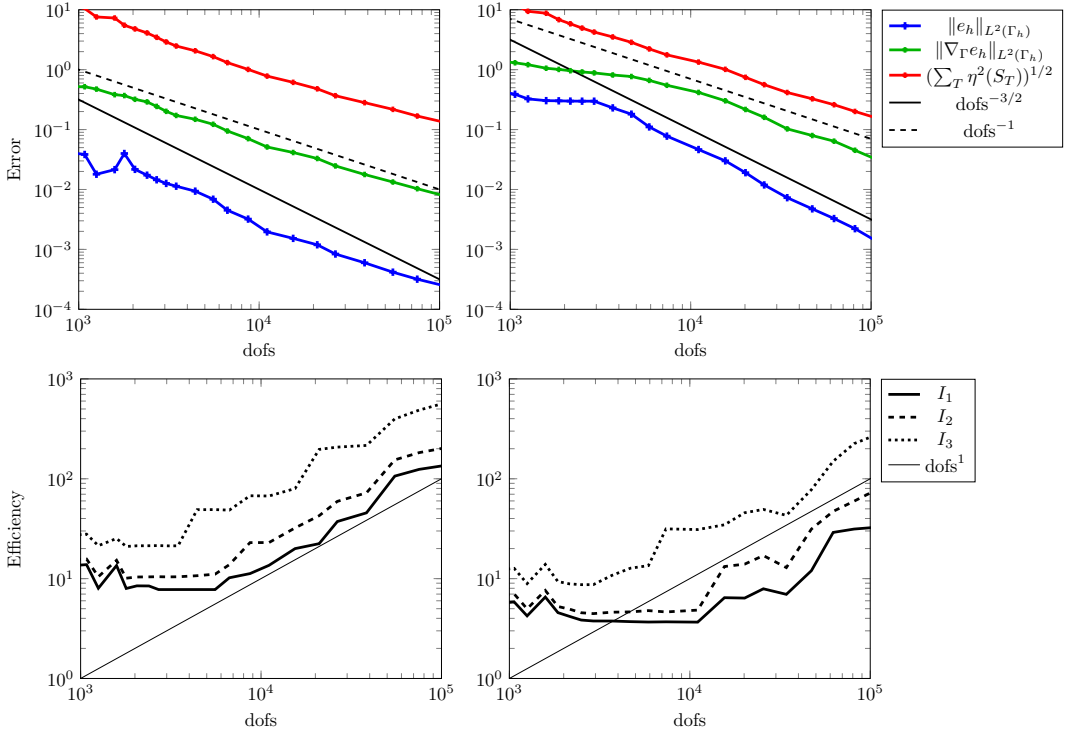
\begin{figure}
		\begin{subfigure}[b]{1\textwidth}            
			\begin{tikzpicture}[scale=0.7]
		\def\vara{1000}
		\def\varb{10000}
		\def\varc{1000}
		\def\vard{10000}
				\begin{loglogaxis}[ xlabel={dofs},xmax=1e5,xmin=1e3,  x tick label style={/pgf/number format/.cd,%
						scaled x ticks = false,
						set decimal separator={.},
						fixed}, ylabel={Error}, ymin=1E-4	, ymax=10
					,legend pos=outer north east]   
					
					\addplot+[blue,mark=+, solid,mark size=2pt,mark options={solid},line width=1.5pt] table[x=Degrees_of, y=L2] {\varfnv}; 
					
					\addplot+[green,mark=+, solid,mark size=1.5pt,mark options={solid},line width=1.5pt] table[x=Degrees_of, y=H1] {\varfnv};        
					
					\addplot+[red,mark=+, solid,mark size=1.5pt,mark options={solid},line width=1.5pt] table[x=Degrees_of, y=Estimator] {\varfnv};  
					
									
			
			\def\sqrtten{0.316227766017}	
			\addplot[solid,line width=1pt] coordinates { 
				(1,\varb) 
				(1e1,\varb*\sqrtten*0.1) (1e2,\varb*0.001) (1e3,\varb*0.0001*\sqrtten) (1e4,\varb*0.000001) (1e5,\varb*0.0000001*\sqrtten) (1e6,\varb*0.000000001)
			};
			
			\addplot[dashed,line width=1pt] coordinates { 
				(1,\varc) (1e1,\varc*0.1) (1e2,\varc*0.01) (1e3,\varc*0.001) (1e4,\varc*0.0001) (1e5,\varc*0.00001) (1e6,\varc*0.000001)
			};
			
			\end{loglogaxis}
			\end{tikzpicture}\begin{tikzpicture}[scale=0.7]
		\def\vara{50000}
		\def\varb{100000}
		\def\varc{7000}
		\def\vard{10000}
				\begin{loglogaxis}[xlabel={dofs},xmax=1e5,xmin=1e3,  x tick label style={/pgf/number format/.cd,%
						scaled x ticks = false,
						set decimal separator={.},
						fixed},  ymin=1E-4, ymax=10
					,legend pos=outer north east]   
					
					\addplot+[blue,mark=+, solid,mark size=2pt,mark options={solid},line width=1.5pt] table[x=Degrees_of, y=L2] {\varfjf}; 
					
					\addplot+[green,mark=+, solid,mark size=1.5pt,mark options={solid},line width=1.5pt] table[x=Degrees_of, y=H1] {\varfjf};        
					
					\addplot+[red,mark=+, solid,mark size=1.5pt,mark options={solid},line width=1.5pt] table[x=Degrees_of, y=Estimator] {\varfjf};  
					
					
			
			\def\sqrtten{0.316227766017}	
			\addplot[solid,line width=1pt] coordinates { 
				(1,\varb) 
				(1e1,\varb*\sqrtten*0.1) (1e2,\varb*0.001) (1e3,\varb*0.0001*\sqrtten) (1e4,\varb*0.000001) (1e5,\varb*0.0000001*\sqrtten) (1e6,\varb*0.000000001)
			};
			
			\addplot[dashed,line width=1pt] coordinates { 
				(1,\varc) (1e1,\varc*0.1) (1e2,\varc*0.01) (1e3,\varc*0.001) (1e4,\varc*0.0001) (1e5,\varc*0.00001) (1e6,\varc*0.000001)
			};
			
			\legend{
						$\|e_h\|_{L^2(\Gamma_h)}$,
						$\|\nablaG e_h\|_{L^2(\Gamma_h)}$,
						$(\sum_T\eta^2(S_T))^{1/2}$,
					$\textrm{dofs}^{-3/2}$,
					$\textrm{dofs}^{-1}$,
					}
				\end{loglogaxis}
			\end{tikzpicture}
		\end{subfigure}%

		\begin{subfigure}[b]{1.0\textwidth}            
			\begin{tikzpicture}[scale=0.7]
				\begin{loglogaxis}[ xlabel={dofs},xmax=1e5,xmin=1e3, ymin=1, ymax=1e3, x tick label style={/pgf/number format/.cd,%
						scaled x ticks = false,
						set decimal separator={.},
						fixed}, ylabel={Efficiency}, 
					,legend pos=outer north east]   
										\def\vara{0.001}
		\def\varb{100000}
		\def\varc{20000}
		\def\vard{10000}
  
					\addplot+[black,mark=none, solid,mark size=2pt,mark options={solid},line width=1.5pt] table[x=Degrees_of, y=Ultimate] {\varfnv}; 
					
					\addplot+[black,mark=none, dashed,mark size=2pt,mark options={solid},line width=1.5pt] table[x=Degrees_of, y=Surface] {\varfnv};

					\addplot+[black,mark=none, dotted,mark size=2pt,mark options={solid},line width=1.5pt] table[x=Degrees_of, y=Residual] {\varfnv}; 
     \addplot[solid,line width=0.2pt] coordinates { 
				(1,\vara) 
				(1e1,\vara*10)
				(1e2,\vara*100)
				(1e3,\vara*1000) (1e4,\vara*10000) (1e5,\vara*100000) (1e6,\vara*1000000)
    };
				\end{loglogaxis}
			\end{tikzpicture}
			\begin{tikzpicture}[scale=0.7]
				\begin{loglogaxis}[ xlabel={dofs},xmax=1e5,xmin=1e3, ymax=1e3, ymin=1, x tick label style={/pgf/number format/.cd,%
						scaled x ticks = false,
						set decimal separator={.},
						fixed},  
					,legend pos=outer north east]   
					\def\vara{0.001}
		\def\varb{100000}
		\def\varc{20000}
		\def\vard{10000}
					\addplot+[black,mark=none, solid,mark size=2pt,mark options={solid},line width=1.5pt] table[x=Degrees_of, y=Ultimate] {\varfjf}; 
					
					\addplot+[black,mark=none, dashed,mark size=2pt,mark options={solid},line width=1.5pt] table[x=Degrees_of, y=Surface] {\varfjf};

					\addplot+[black,mark=none, dotted,mark size=2pt,mark options={solid},line width=1.5pt] table[x=Degrees_of, y=Residual] {\varfjf}; 

     	\addplot[solid,line width=0.2pt] coordinates { 
				(1,\vara) 
				(1e1,\vara*10)
				(1e2,\vara*100)
				(1e3,\vara*1000) (1e4,\vara*10000) (1e5,\vara*100000) (1e6,\vara*1000000)
			};
   \legend{$I_1$, $I_2$, $I_3$, $\textrm{dofs}^{1}$}
				\end{loglogaxis}
			\end{tikzpicture}	
		\end{subfigure}
		
		\caption{Adaptive refinement with $\theta=0.5$ using the indicator \eqref{total_indicator} for the $Q_2$ TraceFEM. Left: $s_h^{NV}$ stabilization with $\rho_S=10h^{-1}_S$. Right: $s_h^{JF2}$ stabilization with $\sigma_F=\tilde{\sigma}_F=\tilde{\sigma}_\Gamma=\sigma_\Gamma=10$. Top: surface errors \eqref{error_norms} for $e_h=u_h-u^e$ and the global estimator $(\sum_T\eta^2(S_T))^{1/2}$.    
			The exact solution $u\in H^{1+\lambda}(\Gamma)$ with $\lambda=0.4$ is  given by \eqref{polar} on the unit sphere $\Gamma$. The indicator \eqref{total_indicator} is reliable \Green{in the energy norm} for the  $Q_2$ TraceFEM with both stabilizations.  The growth of all indexes shown on the bottom panels suggest the lack of efficiency. 
         \Green{Unlike the energy norm, for which the indicator was designed for, convergence rate in $L^2$ norm appears to be suboptimal for the  $s_h^{NV}$ stabilization.}
   }
   \label{fig:main_Q2}
	\end{figure}

\subsubsection{Effect of the stabilization parameter in $s_h^{JF2}$}
\label{sec:omitting}

It was observed in \cite{larson2020stabilization} that the performance of the stabilization $s_h^{JF2}$ defined in \eqref{s_jf2} is sensitive to the choice of the stabilization parameters. We would like to demonstrate how different values of $\sigma_F$ affect the adaptive TraceFEM with indicator \eqref{numerical_indicator}.

We did not observe improvements in efficiency by tuning the parameter $\sigma_F$ in Figure~\ref{fig:main_Q2}, where we used $\sigma_F=10$. To illustrate this point, we present the results of adaptive TraceFEM for two extreme values of the stabilization parameter: $\sigma_F=0.1$ and $\sigma_F=1000$, as shown in Figure~\ref{fig:Q2_stab_param}. Similar to Figure~\ref{fig:main_Q2}, the convergence rates are nearly optimal for both extreme values. However, when $\sigma_F=1000$, achieving the same level of accuracy requires more degrees of freedom compared to the case of $\sigma_F=0.1$.

This behavior of errors is consistent with what is typically observed during uniform refinement. In the adaptive setting, the indicator $\eta$ includes the stabilization, and when $\sigma_F=1000$, the estimator focuses on reducing the contribution of the stabilization $s_h(u_h,u_h)$ to the error functional $\tnorm{e_h}$, as illustrated in the right panels of Figure~\ref{fig:Q2_stab_param}.

\def\varfnv{Q2/no_s_h_eff_FE=2_convergence_lambda=0.400000_STAB=4_IND=1_stab_param=0.100000.dat}
\def\varfjf{Q2/no_s_h_eff_FE=2_convergence_lambda=0.400000_STAB=4_IND=1_stab_param=1000.000000.dat}
	\begin{figure}
		\begin{subfigure}[b]{1\textwidth}            
			\begin{tikzpicture}[scale=0.7]
		\def\vara{50000}
		\def\varb{5000}
		\def\varc{1000}
		\def\vard{10000}
				\begin{loglogaxis}[ xlabel={dofs},xmax=1e6,xmin=1e3,  x tick label style={/pgf/number format/.cd,%
						scaled x ticks = false,
						set decimal separator={.},
						fixed}, ylabel={Error}, ymin=1E-5	, ymax=100
					,legend pos=outer north east]   
					
					\addplot+[blue,mark=+, solid,mark size=2pt,mark options={solid},line width=1.5pt] table[x=Degrees_of, y=L2] {\varfnv}; 
					
					\addplot+[green,mark=+, solid,mark size=1.5pt,mark options={solid},line width=1.5pt] table[x=Degrees_of, y=H1] {\varfnv};        
					
					\addplot+[red,mark=+, solid,mark size=1.5pt,mark options={solid},line width=1.5pt] table[x=Degrees_of, y=Estimator] {\varfnv};  
					
									
			
			\def\sqrtten{0.316227766017}	
			\addplot[solid,line width=1pt] coordinates { 
				(1,\varb) 
				(1e1,\varb*\sqrtten*0.1) (1e2,\varb*0.001) (1e3,\varb*0.0001*\sqrtten) (1e4,\varb*0.000001) (1e5,\varb*0.0000001*\sqrtten) (1e6,\varb*0.000000001)
			};
			
			\addplot[dashed,line width=1pt] coordinates { 
				(1,\varc) (1e1,\varc*0.1) (1e2,\varc*0.01) (1e3,\varc*0.001) (1e4,\varc*0.0001) (1e5,\varc*0.00001) (1e6,\varc*0.000001)
			};
			
			\end{loglogaxis}
			\end{tikzpicture}\begin{tikzpicture}[scale=0.7]
		\def\vara{50000}
		\def\varb{2900000}
		\def\varc{60000}
		\def\vard{10000}
				\begin{loglogaxis}[xlabel={dofs},xmax=1e6,xmin=1e3,  x tick label style={/pgf/number format/.cd,%
						scaled x ticks = false,
						set decimal separator={.},
						fixed},  ymin=1E-5, ymax=100
					,legend pos=outer north east]   
					
					\addplot+[blue,mark=+, solid,mark size=2pt,mark options={solid},line width=1.5pt] table[x=Degrees_of, y=L2] {\varfjf}; 
					
					\addplot+[green,mark=+, solid,mark size=1.5pt,mark options={solid},line width=1.5pt] table[x=Degrees_of, y=H1] {\varfjf};        
					
					\addplot+[red,mark=+, solid,mark size=1.5pt,mark options={solid},line width=1.5pt] table[x=Degrees_of, y=Estimator] {\varfjf};  
					
					
			
			\def\sqrtten{0.316227766017}	
			\addplot[solid,line width=1pt] coordinates { 
				(1,\varb) 
				(1e1,\varb*\sqrtten*0.1) (1e2,\varb*0.001) (1e3,\varb*0.0001*\sqrtten) (1e4,\varb*0.000001) (1e5,\varb*0.0000001*\sqrtten) (1e6,\varb*0.000000001)
			};
			
			\addplot[dashed,line width=1pt] coordinates { 
				(1,\varc) (1e1,\varc*0.1) (1e2,\varc*0.01) (1e3,\varc*0.001) (1e4,\varc*0.0001) (1e5,\varc*0.00001) (1e6,\varc*0.000001)
			};
			
			\legend{
						$\|e_h\|_{L^2(\Gamma_h)}$,
						$\|\nablaG e_h\|_{L^2(\Gamma_h)}$,
						$(\sum_T\eta^2(S_T))^{1/2}$,
					$\textrm{dofs}^{-3/2}$,
					$\textrm{dofs}^{-1}$,
					}
				\end{loglogaxis}
			\end{tikzpicture}
		\end{subfigure}%
		
		\caption{The effect of the stabilization parameter $\sigma_F$ on the adaptive refinement in Figure~\ref{fig:main_Q2}. Left:  $\sigma_F=0.1$. Right: $\sigma_F=1000$. Surface errors \eqref{error_norms} for $e_h=u_h-u^e$ and the global estimator $(\sum_T\eta^2(S_T))^{1/2}$ are shown. We observe that decreasing the stabilization parameter does not improve the lack of efficiency while increasing it postpones the asymptotic regime of convergence. 
   }
   \label{fig:Q2_stab_param}
	\end{figure}
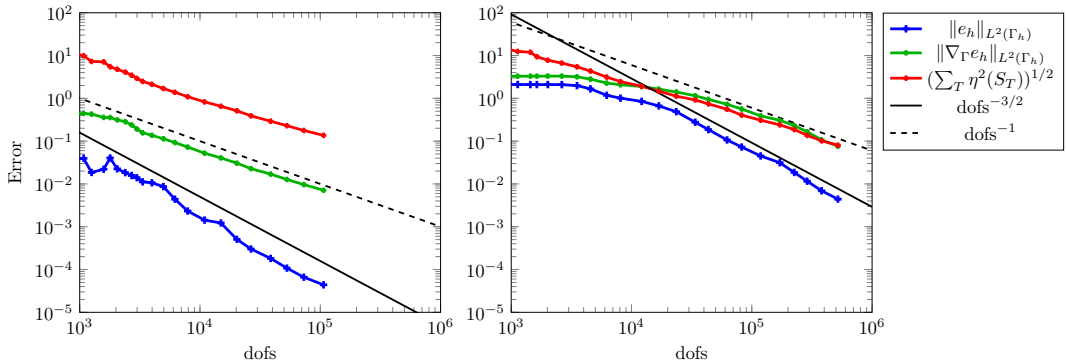

\section{Conclusions} \label{s_concl}
In this paper, we explore the application of adaptive stabilized TraceFEM for the first time. We focus on solving an elliptic problem on a fixed surface using the two lowest-order continuous finite element spaces based on $Q_1$ and $Q_2$ elements. For each family, we investigate both the gradient-jump face and normal-gradient volume stabilizations.

Our analysis demonstrates that the error indicator in the proposed adaptive TraceFEM is reliable, and our numerical tests confirm the theoretical findings. Specifically, for $Q_1$ elements, a reasonable choice for low-regularity solutions, we establish a robust and practical adaptive stabilized TraceFEM \Green{scheme}. 
\Green{In the case of $Q_2$}, the efficiency indexes grow proportionally with the number of active degrees of freedom.

Another significant contribution of this paper relates to the practical implementation of the proposed indicator. Rather than computing gradient jumps along one-dimensional curvilinear edges between surface patches, which can be computationally intensive due to the implicit surface description in TraceFEM, we evaluate gradient jumps on two-dimensional faces between bulk cells. This approach simplifies the implementation of the indicator. 

In conclusion, we recommend caution when using the $Q_2$ element in adaptive stabilized TraceFEM schemes, while the $Q_1$ element provides a highly robust adaptive method.

\subsection*{Acknowledgments}

The author T.H. was partially supported by the National Science Foundation 
Award DMS-2028346, OAC-2015848, EAR-1925575, and by the Computational
Infrastructure in Geodynamics initiative (CIG), through the NSF under Award
EAR-0949446, EAR-1550901, EAR-2149126 via the University of California -- Davis.
The author M.O. was partially supported by the  National Science Foundation under award DMS-2309197.
The author V.Y. was partially supported by the National Science Foundation Award OAC-2015848 and EAR-1925575.
Clemson University is acknowledged for generous allotment of compute time on Palmetto cluster.

\bibliographystyle{ieeetr}
\bibliography{citations}
\end{document}